\title{Evaluation of some non-elementary  integrals involving the generalized hypergeometric function with some applications}
\abstract{The indefinite integral
$$
\int x^\alpha e^{\eta x^\beta}\,_pF_q (a_1, a_2, \cdot\cdot\cdot a_p; b_1, b_2, \cdot\cdot\cdot, b_q; \lambda x^{\gamma})dx,
$$
where $\alpha, \eta, \beta, \lambda, \gamma\ne0$ are real or complex constants and $_pF_q$ is the generalized hypergeometric function,  is evaluated in terms of an infinite series involving the generalized hypergeometric function. 
Related integrals in which the exponential function $e^{\eta x^\beta}$ is either replaced by the hyperbolic function $\cosh\left(\eta x^\beta\right)$ or $\sinh\left(\eta x^\beta\right)$, or the sinusoidal function $\cos\left(\eta x^\beta\right)$ or $\sin\left(\eta x^\beta\right)$, are also evaluated in terms of infinite series involving the generalized hypergeometric function  $_pF_q$. Some application examples from applied analysis, in which some new Fourier and Laplace integrals (or transforms) are evaluated,  are given. The analytical solution of the Orr-Sommerfeld equation (with a linear mean flow background)  in the short-wave limit is expressed in terms of some infinite series involving the hypergeometric series $_2F_3$. Making use of the hyperbolic and Euler identities, some interesting series identities involving exponential, hyperbolic, trigonometric functions and the generalized hypergeometric function are also derived.}
\keywords{Generalized hypergeometric function, Non-elementary integrals, Fourier integral, Laplace integral, Gaussian, Error function}
\begin{document}

\section{Introduction}
\label{sec:1}
The generalized hypergeometric function will be extensively used throughout the paper, so its definition is given here for reference.
\begin{definition}The generalized hypergeometric function, denoted as $_pF_q$, is a special function given by the series \cite{AB,NI}
\begin{equation}
_p F_q(a_1, a_2,\cdots,a_p;b_1, b_2, \cdots, b_q; x)=\sum\limits_{n=0}^{\infty}\frac{(a_1)_n (a_2)_n\cdots (a_p)_n}{(b_1)_n (b_2)_n\cdots (b_q)_n}\frac{x^n}{n!},
\label{eq1.1}
\end{equation} 
where $a_1, a_2,\cdots,a_p$ and $;b_1, b_2, \cdots, b_q$ are arbitrary constants, $(\vartheta)_n=\Gamma(\vartheta+n)/\Gamma(\vartheta)$ (Pochhammer's notation \cite{AB}) for any complex $\vartheta$, with $(\vartheta)_0=1$, and $\Gamma$ is the standard gamma function \cite{AB,NI}.
\label{def1}
\end{definition}

The generalized hypergeometric function $_p F_q$ can be used to represent a large class of functions, from simple functions  (e.g. $e^x, \cos (x), \sin(x)$,  and so on) to non-elementary functions (e.g. error functions, incomplete gamma function, and many more) \cite{AB,NI}. Special functions that include Bessel functions, Legendre polynomials,  Hermite polynomials and many others can, as well, be expressed in terms of $_p F_q$ \cite{AB,NI}. A large class of non-elementary integrals (e.g. exponential integral, logarithmic integral, cosine and sine integrals, Dawson's integral, etc), consisting of integrals which cannot neither be evaluated in terms of elementary functions nor in terms of finite algebraic combinations of elementary functions, can also be evaluated in terms of $_p F_q$ \cite{N1,N2,N3,N4}.

Here, the following  complex (more general)  indefinite integral in which the integrand involves a generalized hypergeometric function $_pF_q$
\begin{equation}
\int x^\alpha e^{\eta x^\beta}\,_pF_q (a_1, a_2, \cdot\cdot\cdot a_p; b_1, b_2, \cdot\cdot\cdot, b_q; \lambda x^{\gamma})dx,
\label{eq1.2}
\end{equation}
 and other related  integrals involving hyperbolic and trigonometric functions,
\begin{equation}
\int x^\alpha \cosh\left(\eta x^\beta\right)\,_pF_q (a_1, a_2, \cdot\cdot\cdot a_p; b_1, b_2, \cdot\cdot\cdot, b_q; \lambda x^{\gamma})dx,
\label{eq1.3}
\end{equation}
\begin{equation}
\int x^\alpha \sinh\left(\eta x^\beta\right)\,_pF_q (a_1, a_2, \cdot\cdot\cdot a_p; b_1, b_2, \cdot\cdot\cdot, b_q; \lambda x^{\gamma})dx,
\label{eq1.4}
\end{equation}
\begin{equation}
\int x^\alpha \cos\left(\eta x^\beta\right)\,_pF_q (a_1, a_2, \cdot\cdot\cdot a_p; b_1, b_2, \cdot\cdot\cdot, b_q; \lambda x^{\gamma})dx
\label{eq1.5}
\end{equation}
and
\begin{equation}
\int x^\alpha \sin\left(\eta x^\beta\right)\,_pF_q (a_1, a_2, \cdot\cdot\cdot a_p; b_1, b_2, \cdot\cdot\cdot, b_q; \lambda x^{\gamma})dx,
\label{eq1.6}
\end{equation}
where $\alpha, \eta, \beta, \lambda$ are real or complex arbitrarily constants and $ \gamma$ is an arbitrarily nonzero constant ($\gamma\ne0$), are evaluated. To my knowledge, no one has evaluated  these  integrals before.  In the present paper, they are evaluated in terms of infinite series involving the generalized hypergeometric function $ _pF_q$. Other interesting integrals  involving the generalized hypergeometric functions or series can be found, for example, in \cite{AQ,AR,K,M}.

The advantage in writing these integrals in terms of series involving  the generalized hypergeometric function $_pF_q$ is that for entire functions $_pF_q$ on the complex plane $\mathbb{C}$, the values or the behaviors of $_pF_q$ as $|x|\to\infty$ may be assessed \cite{AB,NI}. In that case, these integrals may find applications in applied analysis and other fields in applied science and engineering.
For instance, if the integrability condition 
\begin{equation}
\int\limits_{-\infty}^{+\infty} |x^\alpha e^{\eta x^\beta}\,_pF_q (a_1, a_2, \cdot\cdot\cdot a_p; b_1, b_2, \cdot\cdot\cdot, b_q; \lambda x^{\gamma})|dx<\infty
\label{eq1.7}
\end{equation}
is satisfied, then a large class of  important integrals in applied analysis  can be evaluated. For example, the Laplace integral (or transform)
\begin{equation}
\mathcal{L}(u)=\int\limits_{0}^{+\infty} e^{-ux} x^\alpha\,_pF_q (a_1, a_2, \cdot\cdot\cdot a_p; b_1, b_2, \cdot\cdot\cdot ,b_q; \lambda x^{\gamma})dx, \, \mbox{Re}(u)>0,
\label{eq1.8}
\end{equation}
and the Fourier integral (or transform)
\begin{equation}
\mathcal{F}(k)=\int\limits_{-\infty}^{+\infty} e^{ikx} x^\alpha\,_pF_q (a_1, a_2, \cdot\cdot\cdot a_p; b_1, b_2, \cdot\cdot\cdot, b_q; \lambda x^{\gamma})dx,
\label{eq1.9}
\end{equation}
 $u$ and $k$ being the Laplace and Fourier parameters respectively and $\mbox{Re}(u)$ the real part of $u$, can be evaluated. Laplace and Fourier integrals
are extensively used in applied analysis to solve different types of differential equations from ordinary differential equations (ODEs) to fractional partial differential equations (FPDEs); they are used in probability theory and statistics to evaluate  moments and characteristic functions, just to name few \cite{B,KS,ML}.

As illustrative applications, the Fourier integral,  $\int_{-\infty}^{+\infty}e^{-\theta^2 x^2} e^{ikx} dx$ which is useful in obtaining the fundamental solution of the diffusion (heat) equation,  is evaluated using (\ref{eq1.9}) in section \ref{sec:3}. This example is chosen for simplification purpose because it is  already known, and therefore, no further investigations such numerical evaluations are needed for comparison. The related Fourier integral involving the Gaussian, $\int_{-\infty}^{+\infty} x^{\alpha} e^{-\theta^2 x^2} e^{ikx} dx, \,\alpha>-2$, is also evaluated. The Laplace integral $\int_{0}^{+\infty} x^{\alpha} e^{-\theta^2 x^2} e^{-ux} dx, \,\alpha>-1$, involving the Gaussian is also evaluated, and hence that of the error function $\mbox{erf}(x)=\int_0^x e^{-v^2}dv$.  The solution of the Orr-Sommerfeld equation (a fourth order ODE used in linear stability theory of fluid flows)  with the plane Couette flow mean background \cite{SH}, examined in \cite{N5}, which may involve the integrals (\ref{eq1.2}), (\ref{eq1.3}) and (\ref{eq1.4}) is also revisited. 



Furthermore, using the hyperbolic identities 
$$ \cosh\left(\eta x^\beta\right)= \left( e^{\eta x^\beta}+ e^{-\eta x^\beta}\right)/2,\,\, \sinh\left(\eta x^\beta\right)= \left( e^{\eta x^\beta}-e^{-\eta x^\beta}\right)/2$$
and Euler's identities 
$$ \cos\left(\eta x^\beta\right)= \left( e^{i\eta x^\beta}+ e^{-i\eta x^\beta}\right)/2,\,\, \sin\left(\eta x^\beta\right)= \left( e^{i\eta x^\beta}-e^{-i\eta x^\beta}\right)/(2i),$$
we are able to obtain some interesting equalities involving infinite series in terms of the generalized hypergeometric function $_p F_q$. Other interesting equalities and series involving the generalized hypergeometric function may be found, for example, in \cite{AR,CR,QQ}.

The paper is organized as follows. The  indefinite integrals (\ref{eq1.2})-(\ref{eq1.6}) are evaluated in section 2. Some new Fourier and Laplace integrals  involving the Gaussian are presented  in section 3, and the Orr-Sommerfeld equation  with the plane Couette flow mean background \cite{SH} is also examined  in section 3. In section 4, a general discussion is given.

\section{Evaluation of the non-elementary integral $\int x^\alpha e^{\eta x^\beta}\,_pF_q (a_1, a_2, \cdot\cdot\cdot a_p; b_1, b_2, \cdot\cdot\cdot, b_q; \lambda x^{\gamma})dx$ and other related non-elementary integrals}
\label{sec:2} 

Let us first prove a lemma which will be used throughout this paper.
\begin{lemma}
Let $ j\ge0, m\ge0$ and $n\ge0$ be integers, and let $\alpha, \beta$ and $\gamma\ne 0$ be constants. Then
\begin{equation}
\prod \limits_{m=0}^{j}(n\gamma+\alpha+m\beta+1)=\prod \limits_{m=0}^{j}(\alpha+m\beta+1)
\frac{\prod \limits_{m=0}^{j}\left(\frac{\alpha+\gamma+m\beta+1}{\gamma}\right)_n}{\prod \limits_{m=0}^{j}\left(\frac{\alpha+m\beta+1}{\gamma}\right)_n}. 
\label{eq2.1}
\end{equation}
\label{lem1}
\end{lemma}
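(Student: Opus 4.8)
The plan is to reduce the product identity (\ref{eq2.1}) to a single-factor identity for the Pochhammer symbol and then simply multiply. First I would record the elementary consequence of the definition of $(\vartheta)_n$ together with the functional equation $\Gamma(z+1)=z\,\Gamma(z)$: for any constant $a$ and any integer $n\ge0$,
\[
\frac{(a+1)_n}{(a)_n}=\frac{\Gamma(a+1+n)\,\Gamma(a)}{\Gamma(a+1)\,\Gamma(a+n)}
=\frac{(a+n)\,\Gamma(a+n)\,\Gamma(a)}{a\,\Gamma(a)\,\Gamma(a+n)}=\frac{a+n}{a},
\]
equivalently $a+n=a\,(a+1)_n/(a)_n$. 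The cases $n=0$ and $n\ge1$ are both covered, and the identity is first read off for generic $a$ and then extended by continuity.

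Next, for each fixed integer $m$ with $0\le m\le j$, I would specialize this to $a=\frac{\alpha+m\beta+1}{\gamma}$, which is legitimate since $\gamma\ne0$. Then $a+1=\frac{\alpha+\gamma+m\beta+1}{\gamma}$ and $a+n=\frac{n\gamma+\alpha+m\beta+1}{\gamma}$, so after multiplying through by $\gamma$ the single-factor identity becomes
\[
n\gamma+\alpha+m\beta+1=(\alpha+m\beta+1)\,
\frac{\left(\frac{\alpha+\gamma+m\beta+1}{\gamma}\right)_n}{\left(\frac{\alpha+m\beta+1}{\gamma}\right)_n}.
\]
Taking the product of both sides over $m=0,1,\dots,j$ and collecting the numerator and denominator factors separately then yields precisely (\ref{eq2.1}).

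I do not expect any real obstacle: the content of the lemma is exactly that each linear factor $n\gamma+\alpha+m\beta+1$ rewrites as the ratio of two one-step-shifted Pochhammer symbols, and the remainder is bookkeeping of a finite product. The only point deserving a remark is that $(\alpha+m\beta+1)/\gamma$ might be a non-positive integer, in which case a Pochhammer ratio above is a $0/0$ symbol; this is dispatched by treating (\ref{eq2.1}) as an identity of rational functions in the parameters $\alpha,\beta,\gamma$ (equivalently, by an analytic-continuation argument away from those exceptional parameter values). No induction on $j$ is actually needed, since the product step is immediate once the single-factor identity is in hand.
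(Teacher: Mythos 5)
Your proof is correct and follows essentially the same route as the paper: both arguments reduce each linear factor $n\gamma+\alpha+m\beta+1$ to the Pochhammer ratio $(a+1)_n/(a)_n$ with $a=(\alpha+m\beta+1)/\gamma$ via $\Gamma(z+1)=z\,\Gamma(z)$, and then take the product over $m$. Your explicit remark on the degenerate case where $a$ is a non-positive integer is a small additional care the paper omits, but the underlying computation is identical.
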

\begin{proof} We use Pochhammer's notation \cite{AB,NI}, see definition \ref{def1}, and the property of the gamma function \cite{AB,NI}, $\Gamma(a+1)=a\Gamma(a)$. Then 
\begin{align}
\prod \limits_{m=0}^{j}(n\gamma+\alpha+m\beta+1)&=\gamma^j \prod \limits_{m=0}^{j}\left(n+\frac{\alpha+m\beta+1}{\gamma}\right)
\nonumber\\ &= \gamma^j \prod \limits_{m=0}^{j}\frac{\Gamma\left(n+\frac{\alpha+m\beta+1}{\gamma}+1\right)}{\Gamma\left(n+\frac{\alpha+m\beta+1}{\gamma}\right)} 
\nonumber\\ &= \gamma^j \prod \limits_{m=0}^{j}\frac{\left(\frac{\alpha+m\beta+1}{\gamma}+1\right)_n\Gamma\left(\frac{\alpha+m\beta+1}{\gamma}+1\right)}{\left(\frac{\alpha+m\beta+1}{\gamma}\right)_n\Gamma\left(\frac{\alpha+m\beta+1}{\gamma}\right)} 
\nonumber\\ &= \gamma^j \prod \limits_{m=0}^{j}\frac{\left(\frac{\alpha+m\beta+1}{\gamma}+1\right)_n}{\left(\frac{\alpha+m\beta+1}{\gamma}\right)_n} \prod \limits_{m=0}^{j} \left(\frac{\alpha+m\beta+1}{\gamma}\right) 
\nonumber \\ & =\prod \limits_{m=0}^{j}(\alpha+m\beta+1)
\frac{\prod \limits_{m=0}^{j}\left(\frac{\alpha+\gamma+m\beta+1}{\gamma}\right)_n}{\prod \limits_{m=0}^{j}\left(\frac{\alpha+m\beta+1}{\gamma}\right)_n}.
\label{eq2.2}
\end{align}
\end{proof}

\begin{proposition} For any constants $\alpha, \beta, \eta, \lambda$ and $\gamma$ any nonzero constant ($\gamma\ne0$),
\begin{multline}
\int x^\alpha e^{\eta x^\beta}\,_pF_q (a_1, a_2, \cdot\cdot\cdot a_p; b_1, b_2, \cdot\cdot\cdot b_q; \lambda x^{\gamma})dx\\=x^{\alpha+1}e^{\eta x^\beta}\sum\limits_{j=0}^{\infty}\frac{\left(-\beta \eta x^\beta\right)^j}{\prod\limits_{m=0}^{j}(\alpha+m\beta+1)}\,_{p+j}F_{q+j} \Bigl(a_1, \cdot\cdot\cdot, a_p, \frac{\alpha+1}{\gamma}, \frac{\alpha+\beta+1}{\gamma}, \cdot\cdot\cdot, \frac{\alpha+j\beta+1}{\gamma}\\; b_1, \cdot\cdot\cdot, b_q,\frac{\alpha+\gamma+1}{\gamma}, \frac{\alpha+\gamma+\beta+1}{\gamma}, \cdot\cdot\cdot, \frac{\alpha+\gamma+j\beta+1}{\gamma}; \lambda x^{\gamma}\Bigr)+C.
\label{eq2.3}
\end{multline}
\label{prp1}
\end{proposition}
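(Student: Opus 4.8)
The plan is to reduce the problem to one ``atomic'' integral by expanding the hypergeometric factor, and then to repackage the resulting double series with the help of Lemma~\ref{lem1}. First I would insert the series of Definition~\ref{def1}, namely $_pF_q(a_1,\dots,a_p;b_1,\dots,b_q;\lambda x^{\gamma})=\sum_{n=0}^{\infty}\frac{(a_1)_n\cdots(a_p)_n}{(b_1)_n\cdots(b_q)_n}\frac{\lambda^n}{n!}x^{\gamma n}$, and integrate term by term, so that everything reduces to evaluating $\int x^{\alpha+\gamma n}e^{\eta x^{\beta}}\,dx$ for each integer $n\ge0$.

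For the atomic integral $I_\delta:=\int x^{\delta}e^{\eta x^{\beta}}\,dx$, differentiating $x^{\delta+1}e^{\eta x^{\beta}}$ gives the identity $x^{\delta+1}e^{\eta x^{\beta}}=(\delta+1)I_\delta+\eta\beta\,I_{\delta+\beta}$, hence the recursion $I_\delta=\frac{x^{\delta+1}e^{\eta x^{\beta}}}{\delta+1}-\frac{\eta\beta}{\delta+1}\,I_{\delta+\beta}$. Iterating this recursion (equivalently, repeated integration by parts) and checking that the remainder term vanishes yields
\[
\int x^{\delta}e^{\eta x^{\beta}}\,dx=x^{\delta+1}e^{\eta x^{\beta}}\sum_{j=0}^{\infty}\frac{(-\eta\beta x^{\beta})^{j}}{\prod_{m=0}^{j}(\delta+m\beta+1)}+C,
\]
the $j$-series converging since $\prod_{m=0}^{j}(\delta+m\beta+1)$ grows faster than $j!$. (Alternatively one verifies this formula directly: upon differentiating its right-hand side, the two terms produced by the $j$-th summand telescope against those of the neighbouring summands.) Setting $\delta=\alpha+n\gamma$ then gives $\int x^{\alpha+n\gamma}e^{\eta x^{\beta}}\,dx$ in closed form.

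Next I would substitute this back, obtaining a double sum over $n$ and $j$, and apply Lemma~\ref{lem1} to rewrite $\prod_{m=0}^{j}(n\gamma+\alpha+m\beta+1)$ as $\prod_{m=0}^{j}(\alpha+m\beta+1)$ times $\prod_{m=0}^{j}\bigl(\tfrac{\alpha+m\beta+1}{\gamma}\bigr)_n \big/ \prod_{m=0}^{j}\bigl(\tfrac{\alpha+\gamma+m\beta+1}{\gamma}\bigr)_n$. Pulling the $n$-independent factors out front, the inner summation over $n$ collapses to
\[
\sum_{n=0}^{\infty}\frac{(a_1)_n\cdots(a_p)_n\,\prod_{m=0}^{j}\bigl(\tfrac{\alpha+m\beta+1}{\gamma}\bigr)_n}{(b_1)_n\cdots(b_q)_n\,\prod_{m=0}^{j}\bigl(\tfrac{\alpha+\gamma+m\beta+1}{\gamma}\bigr)_n}\,\frac{(\lambda x^{\gamma})^{n}}{n!},
\]
which is precisely the generalized hypergeometric function displayed on the right-hand side of (\ref{eq2.3}), with extra upper parameters $\tfrac{\alpha+m\beta+1}{\gamma}$ and extra lower parameters $\tfrac{\alpha+\gamma+m\beta+1}{\gamma}$ for $m=0,\dots,j$; the leftover prefactor $x^{\alpha+1}e^{\eta x^{\beta}}(-\beta\eta x^{\beta})^{j}\big/\prod_{m=0}^{j}(\alpha+m\beta+1)$ is exactly the coefficient required by the statement, which finishes the identification.

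The step I expect to be the main obstacle is the rigorous justification of the two formal manipulations above: term-by-term integration of the $_pF_q$-series, and above all the interchange $\sum_{n}\sum_{j}=\sum_{j}\sum_{n}$. To legitimize them one should restrict $x$ to a compact set on which $_pF_q(\cdots;\lambda x^{\gamma})$ is analytic and bounded and on which the auxiliary $j$-series converges geometrically, and exhibit a Weierstrass (dominated-convergence) bound for the double series rather than permuting the sums purely formally; one also needs the would-be lower parameters $\tfrac{\alpha+\gamma+m\beta+1}{\gamma}$ to avoid non-positive integers so that every hypergeometric function on the right is defined. An approach that avoids the interchange altogether is to take the right-hand side of (\ref{eq2.3}) as given, argue that its $j$-series may be differentiated term by term, and check directly --- using the recursion above read in reverse together with Lemma~\ref{lem1} --- that its derivative telescopes down to $x^{\alpha}e^{\eta x^{\beta}}\,_pF_q(\cdots;\lambda x^{\gamma})$.
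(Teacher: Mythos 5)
Your proposal is correct and follows essentially the same route as the paper: expand the $_pF_q$ series, reduce to the atomic integral $\int x^{\alpha+n\gamma}e^{\eta x^{\beta}}dx$, generate the $j$-series by repeated integration by parts (your recursion for $I_\delta$ is exactly that), and then apply Lemma~\ref{lem1} to swap the $n$- and $j$-sums and reassemble the $_{p+j}F_{q+j}$. The only differences are cosmetic --- you work directly with $\eta$ instead of the paper's substitution $u^{\beta}=\eta x^{\beta}$, and you flag the convergence/interchange issues that the paper passes over silently.
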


\begin{proof} 
The substitution $u^\beta=\eta x^\beta$ and (\ref{eq1.1}) gives
\begin{equation}
\int x^\alpha e^{\eta x^\beta}\,_pF_q (a_1, \cdot\cdot\cdot a_p; b_1, \cdot\cdot\cdot b_q; \lambda x^{\gamma})dx=\sum\limits_{n=0}^{\infty}\frac{(a_1)_n\cdots (a_p)_n}{(b_1)_n\cdots (b_q)_n}\frac{\lambda^n}{n!}
\frac{1}{\eta^{\frac{n\gamma+\alpha+1}{\beta}}}\int u^{n\gamma+\alpha} e^{u^\beta}du.
\label{eq2.4}
\end{equation}
Successive integration by parts that increases the power of $u$ yields
\begin{align}
\int u^{n\gamma+\alpha} e^{u^\beta}du =&\frac{u^{n\gamma+\alpha+1}e^{u^\beta}}{n\gamma+\alpha+\beta+1}-\frac{\beta u^{n\gamma+\alpha+\beta+1}e^{u^\beta}}{(n\gamma+\alpha+1)(n\gamma+\alpha+\beta+1)} \nonumber\\ &+\frac{\beta^2 u^{n\gamma+\alpha+2\beta+1}e^{u^\beta}}{(n\gamma+\alpha+1)(n\gamma+\alpha+\beta+1)(n\gamma+\alpha+2\beta+1)}\nonumber\\ &-\frac{\beta^3  u^{n\gamma+\alpha+3\beta+1}e^{u^\beta}}{(n\gamma+\alpha+1)(n\gamma+\alpha+\beta+1)(n\gamma+\alpha+2\beta+1)(n\gamma+\alpha+3\beta+1)}
\nonumber\\ &+\cdot\cdot\cdot\cdot+\frac{(-1)^j\beta^j  u^{n\gamma+\alpha+j\beta+1}e^{u^\beta}}{\prod\limits_{m=0}^{j}(n\gamma+\alpha+m\beta+1)}+\cdot\cdot\cdot\cdot
\nonumber\\ &=\sum\limits_{j=0}^{\infty}\frac{(-1)^j\beta^j  u^{n\gamma+\alpha+j\beta+1}e^{u^\beta}}{\prod\limits_{m=0}^{j}(n\gamma+\alpha+m\beta+1)}+C.
\label{eq2.5}
\end{align}
Using Lemma \ref{lem1} yields
\begin{equation}
\int u^{n\gamma+\alpha} e^{u^\beta}du = u^{n\gamma+\alpha+1}e^{u^\beta}\sum\limits_{j=0}^{\infty}\frac{\prod \limits_{m=0}^{j}\left(\frac{\alpha+m\beta+1}{\gamma}\right)_n}{\prod \limits_{m=0}^{j}\left(\frac{\alpha+\gamma+m\beta+1}{\gamma}\right)_n}\frac{(-\beta  u^{\beta})^j}{\prod\limits_{m=0}^{j}(\alpha+m\beta+1)}+C.
\label{eq2.6}
\end{equation}
Then,
\begin{multline}
\int x^\alpha e^{\eta x^\beta}\,_pF_q (a_1, \cdot\cdot\cdot a_p; b_1, \cdot\cdot\cdot b_q; \lambda x^{\gamma})dx=
\left(\frac{u}{\eta^{\beta}}\right)^{\alpha+1} e^{u^\beta}\sum\limits_{j=0}^{\infty}\frac{(-\beta  u^{\beta})^j}{\prod\limits_{m=0}^{j}(\alpha+m\beta+1)}\\ \times
 \sum\limits_{n=0}^{\infty}\frac{(a_1)_n\cdots (a_p)_n \left(\frac{\alpha+1}{\gamma}\right)_n\left(\frac{\alpha+\beta+1}{\gamma}\right)_n\cdots \left(\frac{\alpha+j\beta+1}{\gamma}\right)_n }{(b_1)_n\cdots (b_q)_n\left(\frac{\alpha+\gamma+1}{\gamma}\right)_n\left(\frac{\alpha+\gamma+\beta+1}{\gamma}\right)_n\cdots\left(\frac{\alpha+\gamma+j\beta+1}{\gamma}\right)_n}\frac{\left(\frac{ \lambda u^{\gamma}}{\eta^{\frac{\gamma}{\beta}}}\right)^n}{n!}+C\\
=x^{\alpha+1} e^{\eta x^\beta}\sum\limits_{j=0}^{\infty}\frac{(-\beta\eta  x^{\beta})^j}{\prod\limits_{m=0}^{j}(\alpha+m\beta+1)} \\ \times \sum\limits_{n=0}^{\infty}\frac{(a_1)_n\cdots (a_p)_n \left(\frac{\alpha+1}{\gamma}\right)_n\left(\frac{\alpha+\beta+1}{\gamma}\right)_n\cdots \left(\frac{\alpha+j\beta+1}{\gamma}\right)_n }{(b_1)_n\cdots (b_q)_n\left(\frac{\alpha+\gamma+1}{\gamma}\right)_n\left(\frac{\alpha+\gamma+\beta+1}{\gamma}\right)_n\cdots\left(\frac{\alpha+\gamma+j\beta+1}{\gamma}\right)_n}\frac{\left(\lambda x^\gamma\right)^n}{n!}+C,
\label{eq2.7}
\end{multline}
and this gives (\ref{eq2.3}).
\end{proof}
\subsection{Evaluation of some non-elementary integrals involving the hyperbolic functions $\cosh$ and $\sinh$ and the generalized hypergeometric function $_pF_q$}
\label{subsec:2.1}

\begin{proposition} For any constants $\alpha, \beta, \eta, \lambda$ and $\gamma$ any nonzero constant ($\gamma\ne0$),
\begin{multline}
\int x^\alpha \cosh\left(\eta x^\beta\right)\,_pF_q (a_1, a_2, \cdot\cdot\cdot a_p; b_1, b_2, \cdot\cdot\cdot b_q; \lambda x^{\gamma})dx=x^{\alpha+1}\cosh\left(\eta x^\beta\right)\times \\\sum\limits_{j=0}^{\infty}\frac{\left(\beta \eta x^\beta\right)^{2j}}{\prod\limits_{m=0}^{2j}(\alpha+m\beta+1)}\times
\\_{p+j}F_{q+j} \Bigl(a_1, \cdot\cdot\cdot, a_p, \frac{\alpha+1}{\gamma}, \frac{\alpha+\beta+1}{\gamma},\frac{\alpha+2\beta+1}{\gamma}, \cdot\cdot\cdot, \frac{\alpha+2j\beta+1}{\gamma}\\; b_1, \cdot\cdot\cdot, b_q,\frac{\alpha+\gamma+1}{\gamma},\frac{\alpha+\gamma+\beta+1}{\gamma}, \frac{\alpha+\gamma+2\beta+1}{\gamma}, \cdot\cdot\cdot, \frac{\alpha+\gamma+2j\beta+1}{\gamma}; \lambda x^{\gamma}\Bigr)\\-x^{\alpha+1}\sinh\left(\eta x^\beta\right)\sum\limits_{j=0}^{\infty}\frac{\left(\beta \eta x^\beta\right)^{2j+1}}{\prod\limits_{m=0}^{2j+1}(\alpha+m\beta+1)}\times\\_{p+j}F_{q+j} \Bigl(a_1, \cdot\cdot\cdot, a_p, \frac{\alpha+1}{\gamma}, \frac{\alpha+\beta+1}{\gamma}, \frac{\alpha+2\beta+1}{\gamma}, \cdot\cdot\cdot, \frac{\alpha+(2j+1)\beta+1}{\gamma}; b_1, \cdot\cdot\cdot, b_q,\\ \frac{\alpha+\gamma+1}{\gamma}, \frac{\alpha+\gamma+\beta+1}{\gamma}, \frac{\alpha+\gamma+2\beta+1}{\gamma},\cdot\cdot\cdot, \frac{\alpha+\gamma+(2j+1)\beta+1}{\gamma}; \lambda x^{\gamma}\Bigr)+C.
\label{eq2.8} 
\end{multline}
\label{prp2}
\end{proposition}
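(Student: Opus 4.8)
The plan is to reduce Proposition \ref{prp2} to Proposition \ref{prp1} using the hyperbolic identity $\cosh\left(\eta x^\beta\right) = \tfrac{1}{2}\left(e^{\eta x^\beta} + e^{-\eta x^\beta}\right)$. Writing the integrand as a half-sum of $x^\alpha e^{\eta x^\beta}\,{}_pF_q(a_1,\cdots,a_p;b_1,\cdots,b_q;\lambda x^\gamma)$ and $x^\alpha e^{-\eta x^\beta}\,{}_pF_q(a_1,\cdots,a_p;b_1,\cdots,b_q;\lambda x^\gamma)$, linearity of the integral lets me apply (\ref{eq2.3}) twice: once with the parameter $\eta$, and once with $-\eta$ in place of $\eta$. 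Since $\eta$ enters the right-hand side of (\ref{eq2.3}) only through the factor $e^{\eta x^\beta}$ and through the powers $\left(-\beta\eta x^\beta\right)^j$, the second application reproduces the same series with $e^{\eta x^\beta}$ replaced by $e^{-\eta x^\beta}$ and $\left(-\beta\eta x^\beta\right)^j$ replaced by $\left(\beta\eta x^\beta\right)^j$; the hypergeometric factors ${}_{p+j}F_{q+j}(\cdots;\lambda x^\gamma)$ are unchanged, as they do not involve $\eta$.

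Next I would add the two resulting expressions, divide by $2$, and separate the summation over $j$ into the terms with even $j$ and the terms with odd $j$, reindexing each subsum. For even $j$ one has $\left(-\beta\eta x^\beta\right)^j = \left(\beta\eta x^\beta\right)^j$, so the coefficient is identical in the two copies and the exponential prefactor collapses to $\tfrac{1}{2}\left(e^{\eta x^\beta}+e^{-\eta x^\beta}\right)x^{\alpha+1} = x^{\alpha+1}\cosh\left(\eta x^\beta\right)$; this gives the first double sum in (\ref{eq2.8}), with the finite product running over $m=0,\dots,2j$ and the adjoined hypergeometric parameters $\tfrac{\alpha+m\beta+1}{\gamma}$ and $\tfrac{\alpha+\gamma+m\beta+1}{\gamma}$ for $m=0,\dots,2j$. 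For odd $j$ one has $\left(-\beta\eta x^\beta\right)^j = -\left(\beta\eta x^\beta\right)^j$, so the two copies carry opposite signs and the prefactor becomes $\tfrac{1}{2}\left(-e^{\eta x^\beta}+e^{-\eta x^\beta}\right)x^{\alpha+1} = -x^{\alpha+1}\sinh\left(\eta x^\beta\right)$; this produces the second double sum, now with the product over $m=0,\dots,2j+1$ and the corresponding list of parameters. Collecting the two pieces yields exactly (\ref{eq2.8}).

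Since the argument is essentially bookkeeping, the step that requires the most care — and the one I would treat as the main obstacle — is the even/odd reindexing: one must verify that the sign $(-1)^j$ distributes correctly between the two exponential terms so that the even-index part reassembles into $\cosh\left(\eta x^\beta\right)$ and the odd-index part into $-\sinh\left(\eta x^\beta\right)$, and that the products $\prod_{m=0}^{2j}(\alpha+m\beta+1)$ and $\prod_{m=0}^{2j+1}(\alpha+m\beta+1)$, together with the associated upper and lower parameter lists of the ${}_{p+j}F_{q+j}$, match the ranges stated in (\ref{eq2.8}). Convergence of the series and the role of the arbitrary constant $C$ are inherited directly from Proposition \ref{prp1} and need no separate treatment.
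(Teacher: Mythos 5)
Your proof is correct, but it takes a genuinely different route from the paper. The paper proves Proposition \ref{prp2} directly and independently of Proposition \ref{prp1}: it substitutes $u^\beta=\eta x^\beta$, expands $_pF_q$ term by term, performs the successive integration by parts on $\int u^{n\gamma+\alpha}\cosh(u^\beta)\,du$ (which naturally alternates between $\cosh$ and $\sinh$ terms), and then invokes Lemma \ref{lem1} to reassemble the $n$-sum into the $_{p+j}F_{q+j}$ factors. You instead derive (\ref{eq2.8}) as a corollary of Proposition \ref{prp1} via $\cosh(\eta x^\beta)=\tfrac12(e^{\eta x^\beta}+e^{-\eta x^\beta})$ and an even/odd reindexing of the $j$-sum; your bookkeeping is right — $\eta$ enters (\ref{eq2.3}) only through $e^{\eta x^\beta}$ and $(-\beta\eta x^\beta)^j$, the even-$j$ terms collapse onto $\cosh$, the odd-$j$ terms onto $-\sinh$, and the parameter lists for $m=0,\dots,2j$ and $m=0,\dots,2j+1$ come out exactly as stated. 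This is not circular, since Proposition \ref{prp1} is proved independently. What each approach buys: your route is shorter and avoids repeating the integration-by-parts machinery, but note that the paper deliberately keeps the two computations independent, because comparing the direct proof of (\ref{eq2.8}) with the exponential-identity computation (\ref{eq2.19}) is precisely how Theorem \ref{th1} (identity (\ref{eq2.18})) is obtained. Under your derivation, (\ref{eq2.18}) is no longer the fruit of two independent evaluations of the same integral; it reduces to the algebraic even/odd splitting identity that powers your proof (still true, and arguably proved more transparently, but no longer ``new information''). A minor point worth a sentence in a careful write-up: splitting the convergent $j$-series into its even and odd subsums requires that each subsum converge, which holds here because $\prod_{m=0}^{j}(\alpha+m\beta+1)$ grows factorially in $j$; the paper operates at the same level of formality, so this does not put you below its standard of rigor.
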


\begin{proof} 
The change of variable $u^\beta=\eta x^\beta$ and (\ref{eq1.1}) yields
\begin{multline}
\int x^\alpha \cosh\left(\eta x^\beta\right)\,_pF_q (a_1, \cdot\cdot\cdot a_p; b_1, \cdot\cdot\cdot b_q; \lambda x^{\gamma})dx\\=\sum\limits_{n=0}^{\infty}\frac{(a_1)_n\cdots (a_p)_n}{(b_1)_n\cdots (b_q)_n}\frac{\lambda^n}{n!}
\frac{1}{\eta^{\frac{n\gamma+\alpha+1}{\beta}}}\int u^{n\gamma+\alpha} \cosh\left(u^\beta\right)du+C.
\label{eq2.9}
\end{multline}
Successive integration by parts that increases the power of $u$ gives
\begin{align}
&\int u^{n\gamma+\alpha} \cosh\left(u^\beta\right)du =\frac{u^{n\gamma+\alpha+1}\cosh\left(u^\beta\right)}{n\gamma+\alpha+\beta+1}-\frac{\beta u^{n\gamma+\alpha+\beta+1}\sinh\left(u^\beta\right)}{(n\gamma+\alpha+1)(n\gamma+\alpha+\beta+1)} \nonumber\\ &+\frac{\beta^2 u^{n\gamma+\alpha+2\beta+1}\cosh\left(u^\beta\right)}{(n\gamma+\alpha+1)(n\gamma+\alpha+\beta+1)(n\gamma+\alpha+2\beta+1)}\nonumber\\ &-\frac{\beta^3  u^{n\gamma+\alpha+3\beta+1}\sinh\left(u^\beta\right)}{(n\gamma+\alpha+1)(n\gamma+\alpha+\beta+1)(n\gamma+\alpha+2\beta+1)(n\gamma+\alpha+3\beta+1)}
\nonumber\\ &+\cdot\cdot+\frac{\beta^{2j}  u^{n\gamma+\alpha+2j\beta+1}\cosh\left(u^\beta\right)}{\prod\limits_{m=0}^{2j}(n\gamma+\alpha+m\beta+1)}+\cdots-\frac{\beta^{2j+1}  u^{n\gamma+\alpha+(2j+1)\beta+1}\sinh\left(u^\beta\right)}{\prod\limits_{m=0}^{2j+1}(n\gamma+\alpha+m\beta+1)}-\cdot\cdot
\nonumber\\ &=\cosh\left(u^\beta\right)\sum\limits_{j=0}^{\infty}\frac{\beta^{2j}  u^{n\gamma+\alpha+2j\beta+1}}{\prod\limits_{m=0}^{2j}(n\gamma+\alpha+m\beta+1)}-\sinh\left(u^\beta\right)\sum\limits_{j=0}^{\infty}\frac{\beta^{2j+1}  u^{n\gamma+\alpha+(2j+1)\beta+1}}{\prod\limits_{m=0}^{2j+1}(n\gamma+\alpha+m\beta+1)}+C.
\label{eq2.10}
\end{align}
Using Lemma \ref{lem1} yields
\begin{multline}
\int u^{n\gamma+\alpha}\cosh\left(u^\beta\right)du = u^{n\gamma+\alpha+1}\cosh\left(u^\beta\right)\sum\limits_{j=0}^{\infty}\frac{\prod \limits_{m=0}^{2j}\left(\frac{\alpha+m\beta+1}{\gamma}\right)_n}{\prod \limits_{m=0}^{2j}\left(\frac{\alpha+\gamma+m\beta+1}{\gamma}\right)_n}\frac{(\beta  u^{\beta})^{2j}}{\prod\limits_{m=0}^{2j}(\alpha+m\beta+1)}\\ -u^{n\gamma+\alpha+1}\sinh\left(u^\beta\right)\sum\limits_{j=0}^{\infty}\frac{\prod \limits_{m=0}^{2j+1}\left(\frac{\alpha+m\beta+1}{\gamma}\right)_n}{\prod \limits_{m=0}^{2j+1}\left(\frac{\alpha+\gamma+m\beta+1}{\gamma}\right)_n}\frac{(\beta  u^{\beta})^{2j+1}}{\prod\limits_{k=0}^{2j+1}(\alpha+m\beta+1)}+C.
\label{eq2.11}
\end{multline}
Then,
\begin{multline}
\int x^\alpha\cosh\left(\eta x^\beta\right)\,_pF_q (a_1, \cdot\cdot\cdot a_p; b_1, \cdot\cdot\cdot b_q; \lambda x^{\gamma})dx
=\left(\frac{u}{\eta^{\beta}}\right)^{\alpha+1}\cosh\left(u^\beta\right)\\ \times\sum\limits_{j=0}^{\infty}\frac{(\beta  u^{\beta})^{2j}}{\prod\limits_{m=0}^{2j}(\alpha+m\beta+1)}\times \\ \sum\limits_{n=0}^{\infty}\frac{(a_1)_n\cdot\cdot (a_p)_n \left(\frac{\alpha+1}{\gamma}\right)_n\left(\frac{\alpha+\beta+1}{\gamma}\right)_n\cdot\cdot \left(\frac{\alpha+2j\beta+1}{\gamma}\right)_n }{(b_1)_n\cdot\cdot (b_q)_n\left(\frac{\alpha+\gamma+1}{\gamma}\right)_n\left(\frac{\alpha+\gamma+\beta+1}{\gamma}\right)_n\cdot\cdot\left(\frac{\alpha+\gamma+2j\beta+1}{\gamma}\right)_n}\frac{\left(\frac{ \lambda u^{\gamma}}{\eta^{\frac{\gamma}{\beta}}}\right)^n}{n!}\\
-\left(\frac{u}{\eta^{\beta}}\right)^{\alpha+1}\sinh\left(u^\beta\right) \sum\limits_{j=0}^{\infty}\frac{(\beta  u^{\beta})^{2j+1}}{\prod\limits_{m=0}^{2j+1}(\alpha+m\beta+1)} \times \\ \sum\limits_{n=0}^{\infty}\frac{(a_1)_n\cdot\cdot (a_p)_n \left(\frac{\alpha+1}{\gamma}\right)_n\left(\frac{\alpha+\beta+1}{\gamma}\right)_n\cdot\cdot \left(\frac{\alpha+(2j+1)\beta+1}{\gamma}\right)_n }{(b_1)_n\cdot\cdot (b_q)_n\left(\frac{\alpha+\gamma+1}{\gamma}\right)_n\left(\frac{\alpha+\gamma+\beta+1}{\gamma}\right)_n\cdot\cdot\left(\frac{\alpha+\gamma+(2j+1)\beta+1}{\gamma}\right)_n}\frac{\left(\frac{ \lambda u^{\gamma}}{\eta^{\frac{\gamma}{\beta}}}\right)^n}{n!}\\
=x^{\alpha+1}\cosh\left(\eta x^\beta\right)\sum\limits_{j=0}^{\infty}\frac{(\beta  \eta x^{\beta})^{2j}}{\prod\limits_{m=0}^{2j}(\alpha+m\beta+1)}\\ \times \sum\limits_{n=0}^{\infty}\frac{(a_1)_n\cdot\cdot (a_p)_n \left(\frac{\alpha+1}{\gamma}\right)_n\left(\frac{\alpha+\beta+1}{\gamma}\right)_n\cdot\cdot \left(\frac{\alpha+2j\beta+1}{\gamma}\right)_n }{(b_1)_n\cdot\cdot (b_q)_n\left(\frac{\alpha+\gamma+1}{\gamma}\right)_n\left(\frac{\alpha+\gamma+\beta+1}{\gamma}\right)_n\cdot\cdot\left(\frac{\alpha+\gamma+2j\beta+1}{\gamma}\right)_n}\frac{\left(\lambda x^\gamma\right)^n}{n!}\\
-x^{\alpha+1}\sinh\left(\eta x^\beta\right) \sum\limits_{j=0}^{\infty}\frac{(\beta \eta x^{\beta})^{2j+1}}{\prod\limits_{m=0}^{2j+1}(\alpha+m\beta+1)}\times \\ \sum\limits_{n=0}^{\infty}\frac{(a_1)_n\cdot\cdot (a_p)_n \left(\frac{\alpha+1}{\gamma}\right)_n\left(\frac{\alpha+\beta+1}{\gamma}\right)_n\cdot\cdot \left(\frac{\alpha+(2j+1)\beta+1}{\gamma}\right)_n }{(b_1)_n\cdot\cdot (b_q)_n\left(\frac{\alpha+\gamma+1}{\gamma}\right)_n\left(\frac{\alpha+\gamma+\beta+1}{\gamma}\right)_n\cdot\cdot\left(\frac{\alpha+\gamma+(2j+1)\beta+1}{\gamma}\right)_n}\frac{\left(\lambda x^\gamma\right)^n}{n!}+C,
\label{eq2.12}
\end{multline}
and hence, (\ref{eq2.12}) gives (\ref{eq2.8}).
\end{proof}

\begin{proposition} For any constants $\alpha, \beta, \eta, \lambda$ and $\gamma$ any nonzero constant ($\gamma\ne0$),
\begin{multline}
\int x^\alpha \sinh\left(\eta x^\beta\right)\,_pF_q (a_1, a_2, \cdot\cdot\cdot a_p; b_1, b_2, \cdot\cdot\cdot b_q; \lambda x^{\gamma})dx\\=x^{\alpha+1}\sinh\left(\eta x^\beta\right)\sum\limits_{j=0}^{\infty}\frac{\left(\beta \eta x^\beta\right)^{2j}}{\prod\limits_{m=0}^{2j}(\alpha+m\beta+1)}\\ \times _{p+j}F_{q+j} \Bigl(a_1, \cdot\cdot\cdot, a_p, \frac{\alpha+1}{\gamma}, \frac{\alpha+\beta+1}{\gamma},\frac{\alpha+2\beta+1}{\gamma}, \cdot\cdot\cdot, \frac{\alpha+2j\beta+1}{\gamma}\\; b_1, \cdot\cdot\cdot, b_q,\frac{\alpha+\gamma+1}{\gamma},\frac{\alpha+\gamma+\beta+1}{\gamma}, \frac{\alpha+\gamma+2\beta+1}{\gamma}, \cdot\cdot\cdot, \frac{\alpha+\gamma+2j\beta+1}{\gamma}; \lambda x^{\gamma}\Bigr)\\-x^{\alpha+1}\cosh\left(\eta x^\beta\right)\sum\limits_{j=0}^{\infty}\frac{\left(\beta \eta x^\beta\right)^{2j+1}}{\prod\limits_{m=0}^{2j+1}(\alpha+m\beta+1)}\times\\ _{p+j}F_{q+j} \Bigl(a_1, \cdot\cdot\cdot, a_p, \frac{\alpha+1}{\gamma}, \frac{\alpha+\beta+1}{\gamma}, \frac{\alpha+2\beta+1}{\gamma}, \cdot\cdot\cdot, \frac{\alpha+(2j+1)\beta+1}{\gamma};  b_1, \cdot\cdot\cdot, b_q,\\ \frac{\alpha+\gamma+1}{\gamma}, \frac{\alpha+\gamma+\beta+1}{\gamma}, \frac{\alpha+\gamma+2\beta+1}{\gamma},\cdot\cdot\cdot, \frac{\alpha+\gamma+(2j+1)\beta+1}{\gamma}; \lambda x^{\gamma}\Bigr)+C.
\label{eq2.13}
\end{multline}
\label{prp3}
\end{proposition}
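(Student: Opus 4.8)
The plan is to mirror the proof of Proposition \ref{prp2} almost verbatim, replacing $\cosh$ by $\sinh$ in the outer factor and swapping the roles of the two hyperbolic functions in the integration-by-parts recursion. First I would apply the substitution $u^\beta=\eta x^\beta$ together with the series definition (\ref{eq1.1}) to write the left-hand side as $\sum_{n\ge0}\frac{(a_1)_n\cdots(a_p)_n}{(b_1)_n\cdots(b_q)_n}\frac{\lambda^n}{n!}\eta^{-(n\gamma+\alpha+1)/\beta}\int u^{n\gamma+\alpha}\sinh(u^\beta)\,du$, reducing the problem to evaluating $\int u^{n\gamma+\alpha}\sinh(u^\beta)\,du$.

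Next I would carry out successive integration by parts that raises the power of $u$. Because $\frac{d}{du}\sinh(u^\beta)=\beta u^{\beta-1}\cosh(u^\beta)$ and $\frac{d}{du}\cosh(u^\beta)=\beta u^{\beta-1}\sinh(u^\beta)$, the antiderivative alternates between a $\sinh$-term and a $\cosh$-term: the terms of even index $2j$ carry $\sinh(u^\beta)$ with sign $+$, and the terms of odd index $2j+1$ carry $\cosh(u^\beta)$ with sign $-$, exactly dual to the pattern in (\ref{eq2.10}). This gives
\begin{multline*}
\int u^{n\gamma+\alpha}\sinh(u^\beta)\,du=\sinh(u^\beta)\sum_{j=0}^{\infty}\frac{\beta^{2j}u^{n\gamma+\alpha+2j\beta+1}}{\prod_{m=0}^{2j}(n\gamma+\alpha+m\beta+1)}\\
-\cosh(u^\beta)\sum_{j=0}^{\infty}\frac{\beta^{2j+1}u^{n\gamma+\alpha+(2j+1)\beta+1}}{\prod_{m=0}^{2j+1}(n\gamma+\alpha+m\beta+1)}+C.
\end{multline*}
I would then apply Lemma \ref{lem1} to each finite product $\prod_{m=0}^{j}(n\gamma+\alpha+m\beta+1)$ (with $j=2j$ and $j=2j+1$ respectively) to convert it into $\prod_{m=0}^{j}(\alpha+m\beta+1)$ times a ratio of Pochhammer symbols in $n$, just as in (\ref{eq2.11}).

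Finally I would substitute this back into the $n$-sum, recognize the resulting double series: the inner $n$-sum, after collecting the $(a_i)_n$, $(b_i)_n$, the new Pochhammer factors $\left(\frac{\alpha+m\beta+1}{\gamma}\right)_n$ in the numerator and $\left(\frac{\alpha+\gamma+m\beta+1}{\gamma}\right)_n$ in the denominator, and the power $\left(\lambda u^\gamma/\eta^{\gamma/\beta}\right)^n/n!=(\lambda x^\gamma)^n/n!$, is precisely a $_{p+j}F_{q+j}$; then undo the substitution via $u/\eta^{\beta}\to x$ (so $u^\beta\to\eta x^\beta$ and $u^\gamma/\eta^{\gamma/\beta}\to x^\gamma$), mirroring (\ref{eq2.12}). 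This yields (\ref{eq2.13}). There is no real obstacle here since the computation is structurally identical to Proposition \ref{prp2}; the only point requiring a little care is bookkeeping the parity so that the $\sinh$-factor attaches to the even-$j$ sum and the $\cosh$-factor to the odd-$j$ sum, with the correct overall minus sign on the latter — essentially the opposite assignment to the $\cosh$ case, which is exactly what one expects from differentiating $\sinh$ first instead of $\cosh$.
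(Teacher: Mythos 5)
Your proposal is correct and follows essentially the same route as the paper's own proof: substitution $u^\beta=\eta x^\beta$ with the series definition, successive integration by parts producing the alternating $\sinh$/$\cosh$ pattern with the signs and parities you describe, Lemma \ref{lem1} to convert the products into Pochhammer ratios, and resummation into $_{p+j}F_{q+j}$ before undoing the substitution. No gaps.
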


\begin{proof} 
Making the change of variable $u^\beta=\eta x^\beta$  and using  (\ref{eq1.1}) yields
\begin{multline}
\int x^\alpha \sinh\left(\eta x^\beta\right)\,_pF_q (a_1, \cdot\cdot\cdot a_p; b_1, \cdot\cdot\cdot b_q; \lambda x^{\gamma})dx\\=\sum\limits_{n=0}^{\infty}\frac{(a_1)_n\cdots (a_p)_n}{(b_1)_n\cdots (b_q)_n}\frac{\lambda^n}{n!}
\frac{1}{\eta^{\frac{n\gamma+\alpha+1}{\beta}}}\int u^{n\gamma+\alpha} \sinh\left(u^\beta\right)du+C.
\label{eq2.14}
\end{multline}
Successive integration by parts that increase the power of $u$ gives
\begin{align}
&\int u^{n\gamma+\alpha} \sinh\left(u^\beta\right)du =\frac{u^{n\gamma+\alpha+1}\sinh\left(u^\beta\right)}{n\gamma+\alpha+\beta+1}-\frac{\beta u^{n\gamma+\alpha+\beta+1}\cosh\left(u^\beta\right)}{(n\gamma+\alpha+1)(n\gamma+\alpha+\beta+1)} \nonumber\\ &+\frac{\beta^2 u^{n\gamma+\alpha+2\beta+1}\sinh\left(u^\beta\right)}{(n\gamma+\alpha+1)(n\gamma+\alpha+\beta+1)(n\gamma+\alpha+2\beta+1)}\nonumber\\ &-\frac{\beta^3  u^{n\gamma+\alpha+3\beta+1}\cosh\left(u^\beta\right)}{(n\gamma+\alpha+1)(n\gamma+\alpha+\beta+1)(n\gamma+\alpha+2\beta+1)(n\gamma+\alpha+3\beta+1)}
\nonumber\\ &+\cdot\cdot+\frac{\beta^{2j}  u^{n\gamma+\alpha+2j\beta+1}\sinh\left(u^\beta\right)}{\prod\limits_{m=0}^{2j}(n\gamma+\alpha+m\beta+1)}+\cdot\cdot-\frac{\beta^{2j+1}  u^{n\gamma+\alpha+(2j+1)\beta+1}\cosh\left(u^\beta\right)}{\prod\limits_{m=0}^{2j+1}(n\gamma+\alpha+m\beta+1)}-\cdot\cdot
\nonumber\\ &=\sinh\left(u^\beta\right)\sum\limits_{j=0}^{\infty}\frac{\beta^{2j}  u^{n\gamma+\alpha+2j\beta+1}}{\prod\limits_{m=0}^{2j}(n\gamma+\alpha+m\beta+1)}-\cosh\left(u^\beta\right)\sum\limits_{j=0}^{\infty}\frac{\beta^{2j+1}  u^{n\gamma+\alpha+(2j+1)\beta+1}}{\prod\limits_{m=0}^{2j+1}(n\gamma+\alpha+m\beta+1)}+C.
\label{eq2.15}
\end{align}
Using Lemma \ref{lem1} yields
\begin{multline}
\int u^{n\gamma+\alpha}\cosh\left(u^\beta\right)du = u^{n\gamma+\alpha+1}\sinh\left(u^\beta\right)\sum\limits_{j=0}^{\infty}\frac{\prod \limits_{m=0}^{2j}\left(\frac{\alpha+m\beta+1}{\gamma}\right)_n}{\prod \limits_{m=0}^{2j}\left(\frac{\alpha+\gamma+m\beta+1}{\gamma}\right)_n}\frac{(\beta  u^{\beta})^{2j}}{\prod\limits_{m=0}^{2j}(\alpha+m\beta+1)}\\ -u^{n\gamma+\alpha+1}\cosh\left(u^\beta\right)\sum\limits_{j=0}^{\infty}\frac{\prod \limits_{m=0}^{2j+1}\left(\frac{\alpha+m\beta+1}{\gamma}\right)_n}{\prod \limits_{m=0}^{2j+1}\left(\frac{\alpha+\gamma+m\beta+1}{\gamma}\right)_n}\frac{(\beta  u^{\beta})^{2j+1}}{\prod\limits_{m=0}^{2j+1}(\alpha+m\beta+1)}+C.
\label{eq2.16}
\end{multline}
Then,
\begin{multline}
\int x^\alpha\sinh\left(\eta x^\beta\right)\,_pF_q (a_1, \cdot\cdot\cdot a_p; b_1, \cdot\cdot\cdot b_q; \lambda x^{\gamma})dx
=\left(\frac{u}{\eta^{\beta}}\right)^{\alpha+1}\sinh\left(u^\beta\right)\\ \times\sum\limits_{j=0}^{\infty}\frac{(\beta  u^{\beta})^{2j}}{\prod\limits_{m=0}^{2j}(\alpha+m\beta+1)}\\ \times \sum\limits_{n=0}^{\infty}\frac{(a_1)_n\cdot\cdot (a_p)_n \left(\frac{\alpha+1}{\gamma}\right)_n\left(\frac{\alpha+\beta+1}{\gamma}\right)_n\cdot\cdot \left(\frac{\alpha+2j\beta+1}{\gamma}\right)_n }{(b_1)_n\cdot\cdot (b_q)_n\left(\frac{\alpha+\gamma+1}{\gamma}\right)_n\left(\frac{\alpha+\gamma+\beta+1}{\gamma}\right)_n\cdot\cdot\left(\frac{\alpha+\gamma+2j\beta+1}{\gamma}\right)_n}\frac{\left(\frac{ \lambda u^{\gamma}}{\eta^{\frac{\gamma}{\beta}}}\right)^n}{n!}\\
-\left(\frac{u}{\eta^{\beta}}\right)^{\alpha+1}\cosh\left(u^\beta\right) \sum\limits_{j=0}^{\infty}\frac{(\beta  u^{\beta})^{2j+1}}{\prod\limits_{m=0}^{2j+1}(\alpha+m\beta+1)}\\ \times\sum\limits_{n=0}^{\infty}\frac{(a_1)_n\cdot\cdot (a_p)_n \left(\frac{\alpha+1}{\gamma}\right)_n\left(\frac{\alpha+\beta+1}{\gamma}\right)_n\cdot\cdot \left(\frac{\alpha+(2j+1)\beta+1}{\gamma}\right)_n }{(b_1)_n\cdot\cdot (b_q)_n\left(\frac{\alpha+\gamma+1}{\gamma}\right)_n\left(\frac{\alpha+\gamma+\beta+1}{\gamma}\right)_n\cdot\cdot\left(\frac{\alpha+\gamma+(2j+1)\beta+1}{\gamma}\right)_n}\frac{\left(\frac{ \lambda u^{\gamma}}{\eta^{\frac{\gamma}{\beta}}}\right)^n}{n!}\\
=x^{\alpha+1}\sinh\left(\eta x^\beta\right)\sum\limits_{j=0}^{\infty}\frac{(\beta \eta x^{\beta})^{2j}}{\prod\limits_{m=0}^{2j}(\alpha+m\beta+1)}\\ \times \sum\limits_{n=0}^{\infty}\frac{(a_1)_n\cdot\cdot (a_p)_n \left(\frac{\alpha+1}{\gamma}\right)_n\left(\frac{\alpha+\beta+1}{\gamma}\right)_n\cdot\cdot \left(\frac{\alpha+2j\beta+1}{\gamma}\right)_n }{(b_1)_n\cdot\cdot (b_q)_n\left(\frac{\alpha+\gamma+1}{\gamma}\right)_n\left(\frac{\alpha+\gamma+\beta+1}{\gamma}\right)_n\cdot\cdot\left(\frac{\alpha+\gamma+2j\beta+1}{\gamma}\right)_n}\frac{\left(\lambda x^\gamma\right)^n}{n!}\\
-x^{\alpha+1}\cosh\left(\eta x^\beta\right) \sum\limits_{j=0}^{\infty}\frac{(\beta  \eta x^{\beta})^{2j+1}}{\prod\limits_{m=0}^{2j+1}(\alpha+m\beta+1)} \\ \times\sum\limits_{n=0}^{\infty}\frac{(a_1)_n\cdot\cdot (a_p)_n \left(\frac{\alpha+1}{\gamma}\right)_n\left(\frac{\alpha+\beta+1}{\gamma}\right)_n\cdot\cdot \left(\frac{\alpha+(2j+1)\beta+1}{\gamma}\right)_n }{(b_1)_n\cdot\cdot (b_q)_n\left(\frac{\alpha+\gamma+1}{\gamma}\right)_n\left(\frac{\alpha+\gamma+\beta+1}{\gamma}\right)_n\cdot\cdot\left(\frac{\alpha+\gamma+(2j+1)\beta+1}{\gamma}\right)_n}\frac{\left(\lambda x^\gamma\right)^n}{n!}+C,
\label{eq2.17}
\end{multline}
and hence, (\ref{eq2.17}) gives (\ref{eq2.13}).
\end{proof}

\begin{theorem} For any constants $\alpha, \beta, \eta, \lambda$ and $\gamma$ any nonzero constant ($\gamma\ne0$),
\begin{multline}
\cosh\left(\eta x^\beta\right)\sum\limits_{j=0}^{\infty}\frac{\left(\beta \eta x^\beta\right)^{2j}}{\prod\limits_{m=0}^{2j}(\alpha+m\beta+1)}\times \\ \,_{p+j}F_{q+j} \Bigl(a_1, \cdot\cdot\cdot, a_p, \frac{\alpha+1}{\gamma}, \frac{\alpha+\beta+1}{\gamma},\frac{\alpha+2\beta+1}{\gamma}, \cdot\cdot\cdot, \frac{\alpha+2j\beta+1}{\gamma}\\; b_1, \cdot\cdot\cdot, b_q,\frac{\alpha+\gamma+1}{\gamma},\frac{\alpha+\gamma+\beta+1}{\gamma}, \frac{\alpha+\gamma+2\beta+1}{\gamma}, \cdot\cdot\cdot, \frac{\alpha+\gamma+2j\beta+1}{\gamma}; \lambda x^{\gamma}\Bigr)\\-\sinh\left(\eta x^\beta\right)\sum\limits_{j=0}^{\infty}\frac{\left(\beta \eta x^\beta\right)^{2j+1}}{\prod\limits_{m=0}^{2j+1}(\alpha+m\beta+1)}\times 
\\ _{p+j}F_{q+j} \Bigl(a_1, \cdot\cdot\cdot, a_p, \frac{\alpha+1}{\gamma}, \frac{\alpha+\beta+1}{\gamma}, \frac{\alpha+2\beta+1}{\gamma}, \cdot\cdot\cdot,\frac{\alpha+(2j+1)\beta+1}{\gamma}; b_1, \cdot\cdot\cdot, b_q,\\ \frac{\alpha+\gamma+1}{\gamma}, \frac{\alpha+\gamma+\beta+1}{\gamma}, \frac{\alpha+\gamma+2\beta+1}{\gamma},\cdot\cdot\cdot, \frac{\alpha+\gamma+(2j+1)\beta+1}{\gamma}; \lambda x^{\gamma}\Bigr).\\
=\frac{1}{2}\Bigl[e^{\eta x^\beta}\sum\limits_{j=0}^{\infty}\frac{\left(-\beta \eta x^\beta\right)^j}{\prod\limits_{m=0}^{j}(\alpha+m\beta+1)}\,_{p+j}F_{q+j} \Bigl(a_1, \cdot\cdot\cdot, a_p, \frac{\alpha+1}{\gamma}, \frac{\alpha+\beta+1}{\gamma}, \cdot\cdot\cdot, \frac{\alpha+j\beta+1}{\gamma}\\; b_1, \cdot\cdot\cdot, b_q,\frac{\alpha+\gamma+1}{\gamma}, \frac{\alpha+\gamma+\beta+1}{\gamma}, \cdot\cdot\cdot, \frac{\alpha+\gamma+j\beta+1}{\gamma}; \lambda x^{\gamma}\bigr)\\
\\+e^{-\eta x^\beta}\sum\limits_{j=0}^{\infty}\frac{\left(\beta \eta x^\beta\right)^j}{\prod\limits_{m=0}^{j}(\alpha+m\beta+1)}\,_{p+j}F_{q+j} \Bigl(a_1, \cdot\cdot\cdot, a_p, \frac{\alpha+1}{\gamma}, \frac{\alpha+\beta+1}{\gamma}, \cdot\cdot\cdot, \frac{\alpha+j\beta+1}{\gamma}\\; b_1, \cdot\cdot\cdot, b_q,\frac{\alpha+\gamma+1}{\gamma}, \frac{\alpha+\gamma+\beta+1}{\gamma}, \cdot\cdot\cdot, \frac{\alpha+\gamma+j\beta+1}{\gamma}; \lambda x^{\gamma}\Bigr)\Bigr].
\label{eq2.18}
\end{multline}
\label{th1}
\end{theorem}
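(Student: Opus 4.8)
The plan is to read (\ref{eq2.18}) as the compatibility relation between Proposition~\ref{prp1} and Proposition~\ref{prp2}: both sides of (\ref{eq2.18}), once multiplied by $x^{\alpha+1}$, are antiderivatives of the same function
\[
f(x)=x^\alpha\cosh\!\left(\eta x^\beta\right)\,{}_pF_q(a_1,\dots,a_p;b_1,\dots,b_q;\lambda x^\gamma),
\]
so they can differ only by a constant, which will turn out to be zero. Thus the theorem is "integrate $f$ two ways and compare".

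First I would evaluate $\int f(x)\,dx$ in two ways. On the one hand, Proposition~\ref{prp2}, i.e. (\ref{eq2.8}), gives directly $\int f(x)\,dx=x^{\alpha+1}\cdot(\text{left-hand side of }(\ref{eq2.18}))+C$. On the other hand, using the hyperbolic identity $\cosh\!\left(\eta x^\beta\right)=\tfrac12\bigl(e^{\eta x^\beta}+e^{-\eta x^\beta}\bigr)$ and linearity,
\[
\int f(x)\,dx=\tfrac12\!\int x^\alpha e^{\eta x^\beta}\,{}_pF_q(\cdots;\lambda x^\gamma)\,dx+\tfrac12\!\int x^\alpha e^{-\eta x^\beta}\,{}_pF_q(\cdots;\lambda x^\gamma)\,dx,
\]
and I would apply (\ref{eq2.3}) to each piece, the second with $\eta$ replaced by $-\eta$. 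The key observation here is that the parameters of every ${}_{p+j}F_{q+j}$ occurring in (\ref{eq2.3}) depend on $\alpha,\beta,\gamma$ but not on $\eta$, so those hypergeometric factors are literally the same in the two applications; only the prefactor changes, as $e^{\eta x^\beta}(-\beta\eta x^\beta)^j\mapsto e^{-\eta x^\beta}(\beta\eta x^\beta)^j$. Adding the two results therefore reproduces exactly $x^{\alpha+1}$ times the right-hand side of (\ref{eq2.18}), plus a constant.

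Equating the two evaluations, the function $x^{\alpha+1}\bigl[(\text{LHS of }(\ref{eq2.18}))-(\text{RHS of }(\ref{eq2.18}))\bigr]$ is constant. To see the constant is $0$ I would compare lowest-order behaviour as $x\to0$: the $j=0,\ n=0$ contribution of the left-hand side is $\cosh(0)\cdot\frac1{\alpha+1}\cdot1=\frac1{\alpha+1}$ (the $\sinh$ block and the terms with $j\ge1$ vanish in the limit), while that of the right-hand side is $\tfrac12\bigl(\frac1{\alpha+1}+\frac1{\alpha+1}\bigr)=\frac1{\alpha+1}$; hence the bracket tends to $0$, so $x^{\alpha+1}[\cdots]\to0$ for $\operatorname{Re}\alpha>-1$ and the constant must be zero. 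Dividing by $x^{\alpha+1}$ gives (\ref{eq2.18}).

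The substitutions $\eta\mapsto\pm\eta$ in (\ref{eq2.3}) and the re-indexing are routine; the point that needs the most care — the main obstacle — is ensuring the constant of integration genuinely vanishes, since the "$+C$" in Propositions~\ref{prp1} and~\ref{prp2} are a priori unrelated. If one prefers to avoid the limiting argument, the same conclusion follows by equating the two sides coefficient-by-coefficient as Puiseux series in $x$, which is legitimate because each inner ${}_{p+j}F_{q+j}$ is the same convergent series already handled in Propositions~\ref{prp1}--\ref{prp2}. One should also apply the substitution $u^\beta=\eta x^\beta$ underlying (\ref{eq2.3}) on the same branch in both the $\eta$ and the $-\eta$ instance; at the level of the formal identity (\ref{eq2.18}) this causes no trouble.
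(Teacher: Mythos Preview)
Your approach is essentially the same as the paper's: use $\cosh(\eta x^\beta)=\tfrac12(e^{\eta x^\beta}+e^{-\eta x^\beta})$ together with Proposition~\ref{prp1} (applied with $\eta$ and with $-\eta$) to obtain a second evaluation of $\int x^\alpha\cosh(\eta x^\beta)\,{}_pF_q(\cdots;\lambda x^\gamma)\,dx$, and then compare with the evaluation from Proposition~\ref{prp2}. The paper simply writes ``Comparing (\ref{eq2.19}) with (\ref{eq2.8}) gives (\ref{eq2.18})'' and does not discuss the constant of integration at all; your extra step checking that the constant vanishes (via the $x\to0$ limit or, equivalently, matching leading Puiseux terms) is a detail the paper omits.
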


\begin{proof}
Using the hyperbolic identity $ \cosh\left(\eta x^\beta\right)= \left( e^{\eta x^\beta}+ e^{-\eta x^\beta}\right)/2$ and Proposition \ref{prp1} yields
\begin{multline}
\int x^\alpha\cosh\left(\eta x^\beta\right)\,_pF_q (a_1, \cdot\cdot\cdot a_p; b_1, \cdot\cdot\cdot b_q; \lambda x^{\gamma})dx=\frac{1}{2}\Bigl[\\ \int x^\alpha e^{\eta x^\beta}\,_pF_q (a_1, \cdot\cdot\cdot a_p; b_1, \cdot\cdot\cdot b_q; \lambda x^{\gamma})dx+\int x^\alpha e^{-\eta x^\beta}\,_pF_q (a_1, \cdot\cdot\cdot a_p; b_1, \cdot\cdot\cdot b_q; \lambda x^{\gamma})dx\Bigr]\\
=\frac{1}{2}\Bigl[e^{\eta x^\beta}\sum\limits_{j=0}^{\infty}\frac{\left(-\beta \eta x^\beta\right)^j}{\prod\limits_{m=0}^{j}(\alpha+m\beta+1)}\,_{p+j}F_{q+j} \Bigl(a_1, \cdot\cdot\cdot, a_p, \frac{\alpha+1}{\gamma}, \frac{\alpha+\beta+1}{\gamma}, \cdot\cdot\cdot, \frac{\alpha+j\beta+1}{\gamma}\\; b_1, \cdot\cdot\cdot, b_q,\frac{\alpha+\gamma+1}{\gamma}, \frac{\alpha+\gamma+\beta+1}{\gamma}, \cdot\cdot\cdot, \frac{\alpha+\gamma+j\beta+1}{\gamma}; \lambda x^{\gamma}\bigr)\\
\\+e^{-\eta x^\beta}\sum\limits_{j=0}^{\infty}\frac{\left(\beta \eta x^\beta\right)^j}{\prod\limits_{m=0}^{j}(\alpha+m\beta+1)}\,_{p+j}F_{q+j} \Bigl(a_1, \cdot\cdot\cdot, a_p, \frac{\alpha+1}{\gamma}, \frac{\alpha+\beta+1}{\gamma}, \cdot\cdot\cdot, \frac{\alpha+j\beta+1}{\gamma}\\; b_1, \cdot\cdot\cdot, b_q,\frac{\alpha+\gamma+1}{\gamma}, \frac{\alpha+\gamma+\beta+1}{\gamma}, \cdot\cdot\cdot, \frac{\alpha+\gamma+j\beta+1}{\gamma}; \lambda x^{\gamma}\Bigr)\Bigr]+C.
\label{eq2.19}
\end{multline}
Hence, Comparing (\ref{eq2.19}) with (\ref{eq2.8}) gives (\ref{eq2.18}). 
\end{proof}

\begin{theorem} For any constants $\alpha, \beta, \eta, \lambda$ and $\gamma$ any nonzero constant ($\gamma\ne0$),
\begin{multline}
\sinh\left(\eta x^\beta\right)\sum\limits_{j=0}^{\infty}\frac{\left(\beta \eta x^\beta\right)^{2j}}{\prod\limits_{m=0}^{2j}(\alpha+m\beta+1)}\times \\ _{p+j}F_{q+j} \Bigl(a_1, \cdot\cdot\cdot, a_p, \frac{\alpha+1}{\gamma}, \frac{\alpha+\beta+1}{\gamma},\frac{\alpha+2\beta+1}{\gamma}, \cdot\cdot\cdot, \frac{\alpha+2j\beta+1}{\gamma}\\; b_1, \cdot\cdot\cdot, b_q,\frac{\alpha+\gamma+1}{\gamma},\frac{\alpha+\gamma+\beta+1}{\gamma}, \frac{\alpha+\gamma+2\beta+1}{\gamma}, \cdot\cdot\cdot, \frac{\alpha+\gamma+2j\beta+1}{\gamma}; \lambda x^{\gamma}\Bigr)\\
-\cosh\left(\eta x^\beta\right)\sum\limits_{j=0}^{\infty}\frac{\left(\beta \eta x^\beta\right)^{2j+1}}{\prod\limits_{m=0}^{2j+1}(\alpha+m\beta+1)}\times \\ _{p+j}F_{q+j} \Bigl(a_1, \cdot\cdot\cdot, a_p, \frac{\alpha+1}{\gamma}, \frac{\alpha+\beta+1}{\gamma}, \frac{\alpha+2\beta+1}{\gamma}, \cdot\cdot\cdot, \frac{\alpha+(2j+1)\beta+1}{\gamma}; b_1, \cdot\cdot\cdot, b_q,\\ \frac{\alpha+\gamma+1}{\gamma}, \frac{\alpha+\gamma+\beta+1}{\gamma}, \frac{\alpha+\gamma+2\beta+1}{\gamma},\cdot\cdot\cdot, \frac{\alpha+\gamma+(2j+1)\beta+1}{\gamma}; \lambda x^{\gamma}\Bigr).\\ 
=\frac{1}{2}\Bigl[e^{\eta x^\beta}\sum\limits_{j=0}^{\infty}\frac{\left(-\beta \eta x^\beta\right)^j}{\prod\limits_{m=0}^{j}(\alpha+m\beta+1)}\,_{p+j}F_{q+j} \Bigl(a_1, \cdot\cdot\cdot, a_p, \frac{\alpha+1}{\gamma}, \frac{\alpha+\beta+1}{\gamma}, \cdot\cdot\cdot, \frac{\alpha+j\beta+1}{\gamma}\\; b_1, \cdot\cdot\cdot, b_q,\frac{\alpha+\gamma+1}{\gamma}, \frac{\alpha+\gamma+\beta+1}{\gamma}, \cdot\cdot\cdot, \frac{\alpha+\gamma+j\beta+1}{\gamma}; \lambda x^{\gamma}\Bigr)\\
-e^{-\eta x^\beta}\sum\limits_{j=0}^{\infty}\frac{\left(\beta \eta x^\beta\right)^j}{\prod\limits_{m=0}^{j}(\alpha+m\beta+1)}\,_{p+j}F_{q+j} \Bigl(a_1, \cdot\cdot\cdot, a_p, \frac{\alpha+1}{\gamma}, \frac{\alpha+\beta+1}{\gamma}, \cdot\cdot\cdot, \frac{\alpha+j\beta+1}{\gamma}\\; b_1, \cdot\cdot\cdot, b_q,\frac{\alpha+\gamma+1}{\gamma}, \frac{\alpha+\gamma+\beta+1}{\gamma}, \cdot\cdot\cdot, \frac{\alpha+\gamma+j\beta+1}{\gamma}; \lambda x^{\gamma}\Bigr)\Bigr].
\label{eq2.20}
\end{multline}
\label{th2}
\end{theorem}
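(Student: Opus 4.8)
The plan is to follow exactly the template of the proof of Theorem~\ref{th1}, with $\sinh$ in place of $\cosh$. First I would evaluate the indefinite integral $\int x^\alpha \sinh\left(\eta x^\beta\right)\,_pF_q (a_1, \cdot\cdot\cdot a_p; b_1, \cdot\cdot\cdot b_q; \lambda x^{\gamma})\,dx$ in two independent ways and then equate the answers. For the first way I would use the hyperbolic identity $\sinh\left(\eta x^\beta\right)=\left(e^{\eta x^\beta}-e^{-\eta x^\beta}\right)/2$, which writes the integral as $\tfrac12$ times the difference of two integrals already covered by Proposition~\ref{prp1}: one with exponent $\eta x^\beta$ and one with exponent $-\eta x^\beta$. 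Substituting $\eta\mapsto-\eta$ in formula (\ref{eq2.3}) changes $e^{\eta x^\beta}$ into $e^{-\eta x^\beta}$ and turns the coefficient $\left(-\beta\eta x^\beta\right)^j$ into $\left(\beta\eta x^\beta\right)^j$, while leaving every hypergeometric parameter $\tfrac{\alpha+m\beta+1}{\gamma}$, $\tfrac{\alpha+\gamma+m\beta+1}{\gamma}$ untouched (they do not depend on $\eta$). Collecting the two pieces produces precisely the bracketed expression on the right-hand side of (\ref{eq2.20}), multiplied by $x^{\alpha+1}$ and up to an integration constant $C$.

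For the second way I would simply invoke Proposition~\ref{prp3}: equation (\ref{eq2.13}) is itself a closed antiderivative of the same integrand, namely $x^{\alpha+1}$ times the $\sinh$/$\cosh$ combination of $\,_{p+j}F_{q+j}$ series that appears on the left-hand side of (\ref{eq2.20}), plus a constant. Since two antiderivatives of one function differ only by a constant, setting the two representations equal gives $x^{\alpha+1}$ times the left-hand side of (\ref{eq2.20}) equal to $x^{\alpha+1}$ times the right-hand side of (\ref{eq2.20}) plus a constant; dividing by $x^{\alpha+1}$ and comparing the resulting power series, which are analytic in $x^\beta$ and $x^\gamma$, forces that constant to be zero — for instance by letting $x\to0$, where both combinations manifestly vanish — and yields (\ref{eq2.20}) exactly.

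The argument needs no analytic ingredient beyond Propositions~\ref{prp1} and~\ref{prp3}, so there is no genuinely hard step; the only thing requiring care — just as in Theorem~\ref{th1} — is the bookkeeping of indices: checking that the even powers $\left(\beta\eta x^\beta\right)^{2j}$ recombine with $\sinh\left(\eta x^\beta\right)$ and the odd powers $\left(\beta\eta x^\beta\right)^{2j+1}$ with $\cosh\left(\eta x^\beta\right)$ when the exponentials $e^{\pm\eta x^\beta}$ are expanded, and that the numerator and denominator parameters supplied by Lemma~\ref{lem1} match on both sides. A slightly less slick alternative would be to expand $e^{\pm\eta x^\beta}$ into their Taylor series on the right of (\ref{eq2.20}) and reorganise the double sum into its even and odd parts, but the comparison of two evaluations of the same integral is cleaner and is the route I would take.
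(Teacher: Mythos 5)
Your proposal is correct and follows essentially the same route as the paper: the paper proves Theorem~\ref{th2} by applying the identity $\sinh\left(\eta x^\beta\right)=\left(e^{\eta x^\beta}-e^{-\eta x^\beta}\right)/2$ together with Proposition~\ref{prp1} to obtain one antiderivative, and then compares it with the antiderivative given by Proposition~\ref{prp3} in (\ref{eq2.13}). Your added remark about why the constant separating the two antiderivatives vanishes is a small refinement the paper leaves implicit, but it does not change the argument.
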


\begin{proof}
To prove (\ref{eq2.20}), we use the hyperbolic identity $\sinh\left(\eta x^\beta\right)= \left( e^{\eta x^\beta}-e^{-\eta x^\beta}\right)/2$ and Proposition \ref{prp1}, and  we obtain
\begin{multline}
\int x^\alpha\sinh\left(\eta x^\beta\right)\,_pF_q (a_1, \cdot\cdot\cdot a_p; b_1, \cdot\cdot\cdot b_q; \lambda x^{\gamma})dx= \frac{1}{2}\Bigl[\\ \int x^\alpha e^{\eta x^\beta}\,_pF_q (a_1, \cdot\cdot\cdot a_p; b_1, \cdot\cdot\cdot b_q; \lambda x^{\gamma})dx+\int x^\alpha e^{-\eta x^\beta}\,_pF_q (a_1, \cdot\cdot\cdot a_p; b_1, \cdot\cdot\cdot b_q; \lambda x^{\gamma})dx\Bigr]\\
=\frac{1}{2}\Bigl[e^{\eta x^\beta}\sum\limits_{j=0}^{\infty}\frac{\left(-\beta \eta x^\beta\right)^j}{\prod\limits_{m=0}^{j}(\alpha+m\beta+1)}\,_{p+j}F_{q+j} \Bigl(a_1, \cdot\cdot\cdot, a_p, \frac{\alpha+1}{\gamma}, \frac{\alpha+\beta+1}{\gamma}, \cdot\cdot\cdot, \frac{\alpha+j\beta+1}{\gamma}\\; b_1, \cdot\cdot\cdot, b_q,\frac{\alpha+\gamma+1}{\gamma}, \frac{\alpha+\gamma+\beta+1}{\gamma}, \cdot\cdot\cdot, \frac{\alpha+\gamma+j\beta+1}{\gamma}; \lambda x^{\gamma}\bigr)\\
\\-e^{-\eta x^\beta}\sum\limits_{j=0}^{\infty}\frac{\left(\beta \eta x^\beta\right)^j}{\prod\limits_{m=0}^{j}(\alpha+m\beta+1)}\,_{p+j}F_{q+j} \Bigl(a_1, \cdot\cdot\cdot, a_p, \frac{\alpha+1}{\gamma}, \frac{\alpha+\beta+1}{\gamma}, \cdot\cdot\cdot, \frac{\alpha+j\beta+1}{\gamma}\\; b_1, \cdot\cdot\cdot, b_q,\frac{\alpha+\gamma+1}{\gamma}, \frac{\alpha+\gamma+\beta+1}{\gamma}, \cdot\cdot\cdot, \frac{\alpha+\gamma+j\beta+1}{\gamma}; \lambda x^{\gamma}\Bigr)\Bigr]+C.
\label{eq2.21}
\end{multline}
Hence, Comparing (\ref{eq2.21}) with (\ref{eq2.13}) gives (\ref{eq2.20}). 
\end{proof}

\begin{theorem} For any constants $\alpha, \beta, \eta, \lambda$ and $\gamma$ any nonzero constant ($\gamma\ne0$),
\begin{multline}
e^{\eta x^\beta}\sum\limits_{j=0}^{\infty}\frac{\left(-\beta \eta x^\beta\right)^j}{\prod\limits_{m=0}^{j}(\alpha+m\beta+1)}\,_{p+j}F_{q+j} \Bigl(a_1, \cdot\cdot\cdot, a_p, \frac{\alpha+1}{\gamma}, \frac{\alpha+\beta+1}{\gamma}, \cdot\cdot\cdot, \frac{\alpha+j\beta+1}{\gamma}\\; b_1, \cdot\cdot\cdot, b_q,\frac{\alpha+\gamma+1}{\gamma}, \frac{\alpha+\gamma+\beta+1}{\gamma}, \cdot\cdot\cdot, \frac{\alpha+\gamma+j\beta+1}{\gamma}; \lambda x^{\gamma}\Bigr)\\
=\cosh\left(\eta x^\beta\right)\sum\limits_{j=0}^{\infty}\frac{\left(\beta \eta x^\beta\right)^{2j}}{\prod\limits_{m=0}^{2j}(\alpha+m\beta+1)}\times \\ _{p+j}F_{q+j} \Bigl(a_1, \cdot\cdot\cdot, a_p, \frac{\alpha+1}{\gamma}, \frac{\alpha+\beta+1}{\gamma},\frac{\alpha+2\beta+1}{\gamma}, \cdot\cdot\cdot, \frac{\alpha+2j\beta+1}{\gamma}\\; b_1, \cdot\cdot\cdot, b_q,\frac{\alpha+\gamma+1}{\gamma},\frac{\alpha+\gamma+\beta+1}{\gamma}, \frac{\alpha+\gamma+2\beta+1}{\gamma}, \cdot\cdot\cdot, \frac{\alpha+\gamma+2j\beta+1}{\gamma}; \lambda x^{\gamma}\Bigr)\\
-\sinh\left(\eta x^\beta\right)\sum\limits_{j=0}^{\infty}\frac{\left(\beta \eta x^\beta\right)^{2j+1}}{\prod\limits_{m=0}^{2j+1}(\alpha+m\beta+1)}\times \\ _{p+j}F_{q+j} \Bigl(a_1, \cdot\cdot\cdot, a_p, \frac{\alpha+1}{\gamma}, \frac{\alpha+\beta+1}{\gamma}, \frac{\alpha+2\beta+1}{\gamma}, \cdot\cdot\cdot,\frac{\alpha+(2j+1)\beta+1}{\gamma}; b_1, \cdot\cdot\cdot, b_q,\\\frac{\alpha+\gamma+1}{\gamma}, \frac{\alpha+\gamma+\beta+1}{\gamma}, \frac{\alpha+\gamma+2\beta+1}{\gamma},\cdot\cdot\cdot, \frac{\alpha+\gamma+(2j+1)\beta+1}{\gamma}; \lambda x^{\gamma}\Bigr)\\ 
+\sinh\left(\eta x^\beta\right) \sum\limits_{j=0}^{\infty}\frac{\left(\beta \eta x^\beta\right)^{2j}}{\prod\limits_{m=0}^{2j}(\alpha+m\beta+1)}\times \\ _{p+j}F_{q+j} \Bigl(a_1, \cdot\cdot\cdot, a_p, \frac{\alpha+1}{\gamma}, \frac{\alpha+\beta+1}{\gamma},\frac{\alpha+2\beta+1}{\gamma}, \cdot\cdot\cdot, \frac{\alpha+2j\beta+1}{\gamma}; b_1, \cdot\cdot\cdot, b_q,\\ \frac{\alpha+\gamma+1}{\gamma},\frac{\alpha+\gamma+\beta+1}{\gamma}, \frac{\alpha+\gamma+2\beta+1}{\gamma}, \cdot\cdot\cdot, \frac{\alpha+\gamma+2j\beta+1}{\gamma}; \lambda x^{\gamma}\Bigr)\\
-\cosh\left(\eta x^\beta\right)\sum\limits_{j=0}^{\infty}\frac{\left(\beta \eta x^\beta\right)^{2j+1}}{\prod\limits_{m=0}^{2j+1}(\alpha+m\beta+1)}\times\\ _{p+j}F_{q+j} \Bigl(a_1, \cdot\cdot\cdot, a_p, \frac{\alpha+1}{\gamma}, \frac{\alpha+\beta+1}{\gamma}, \frac{\alpha+2\beta+1}{\gamma}, \cdot\cdot\cdot, \frac{\alpha+(2j+1)\beta+1}{\gamma}; b_1, \cdot\cdot\cdot, b_q,\\ \frac{\alpha+\gamma+1}{\gamma}, \frac{\alpha+\gamma+\beta+1}{\gamma}, \frac{\alpha+\gamma+2\beta+1}{\gamma},\cdot\cdot\cdot, \frac{\alpha+\gamma+(2j+1)\beta+1}{\gamma}; \lambda x^{\gamma}\Bigr). 
\label{eq2.22}
\end{multline}
\label{th3}
\end{theorem}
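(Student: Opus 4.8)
The plan is to obtain (\ref{eq2.22}) directly by \emph{adding} the two identities already established in Theorems \ref{th1} and \ref{th2}. Denote by $S_+$ the first and by $S_-$ the second of the two infinite hypergeometric series that appear on the right-hand side of (\ref{eq2.18}); thus $S_+$ carries the factor $(-\beta\eta x^\beta)^j$ inside the sum over $j$ and $S_-$ carries $(\beta\eta x^\beta)^j$, and the very same two series appear on the right-hand side of (\ref{eq2.20}). With this notation the right-hand side of (\ref{eq2.18}) reads $\frac{1}{2}\bigl(e^{\eta x^\beta}S_+ + e^{-\eta x^\beta}S_-\bigr)$ and that of (\ref{eq2.20}) reads $\frac{1}{2}\bigl(e^{\eta x^\beta}S_+ - e^{-\eta x^\beta}S_-\bigr)$, so adding the two collapses the right-hand sides to $e^{\eta x^\beta}S_+$, which is exactly the left-hand side of (\ref{eq2.22}).

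It then remains only to recognise that the left-hand sides of (\ref{eq2.18}) and (\ref{eq2.20}), when added, reproduce the four-fold series on the right-hand side of (\ref{eq2.22}). This is pure term-matching: the first two of those four series --- $\cosh(\eta x^\beta)$ times the even-order ($2j$) hypergeometric series minus $\sinh(\eta x^\beta)$ times the odd-order ($2j+1$) series --- constitute the left-hand side of (\ref{eq2.18}), while the last two --- $\sinh(\eta x^\beta)$ times the even-order series minus $\cosh(\eta x^\beta)$ times the odd-order series --- constitute the left-hand side of (\ref{eq2.20}). Since adding two valid equalities yields a valid equality, and since all the series concerned converge on the common domain where $\,_pF_q$ and the inner series are defined (so that the termwise addition is legitimate), (\ref{eq2.22}) follows at once.

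An equivalent route, relying only on Propositions \ref{prp1}--\ref{prp3} rather than on Theorems \ref{th1} and \ref{th2}, is to start from the identity $e^{\eta x^\beta}=\cosh(\eta x^\beta)+\sinh(\eta x^\beta)$, whence
\[
\int x^\alpha e^{\eta x^\beta}\,_pF_q\,dx=\int x^\alpha\cosh(\eta x^\beta)\,_pF_q\,dx+\int x^\alpha\sinh(\eta x^\beta)\,_pF_q\,dx,
\]
and then to substitute Proposition \ref{prp1} on the left and Propositions \ref{prp2} and \ref{prp3} on the right, cancel the common factor $x^{\alpha+1}$ and the arbitrary constant, and read off (\ref{eq2.22}). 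Either way there is no genuine analytic difficulty in the argument; the only point that requires care is the bookkeeping --- correctly pairing off the parameter lists of the ${}_{p+j}F_{q+j}$'s, keeping the even-order ($2j$) and odd-order ($2j+1$) index patterns straight, and tracking the four signs and the factors $\frac{1}{2}$ as in the proofs of Theorems \ref{th1} and \ref{th2}. I expect this bookkeeping to be the main (indeed the only) obstacle.
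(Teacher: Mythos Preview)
Your proposal is correct, and the ``equivalent route'' you describe in your final paragraph is exactly the paper's own proof: the paper writes $e^{\eta x^\beta}=\cosh(\eta x^\beta)+\sinh(\eta x^\beta)$, substitutes Propositions \ref{prp2} and \ref{prp3} on the right, and compares with Proposition \ref{prp1} (equation (\ref{eq2.3})) on the left to read off (\ref{eq2.22}). Your primary route via adding Theorems \ref{th1} and \ref{th2} is a harmless reorganisation of the same comparison, since those theorems are themselves obtained by comparing Propositions \ref{prp2}--\ref{prp3} with Proposition \ref{prp1}; it buys a slightly cleaner bookkeeping (the factors $\tfrac12$ and the $e^{-\eta x^\beta}S_-$ terms visibly cancel) at the cost of invoking two intermediate results rather than the underlying propositions directly.
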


\begin{proof}
Using the relation  $ e^{\eta x^\beta}=\cosh\left(\eta x^\beta\right)+\sinh\left(\eta x^\beta\right)$ and  Propositions \ref{prp2} and \ref{prp3} yields
\begin{multline}
\int x^\alpha e^{\eta x^\beta}\,_pF_q (a_1, \cdot\cdot\cdot a_p; b_1, \cdot\cdot\cdot b_q; \lambda x^{\gamma})dx= \int x^\alpha\cosh\left(\eta x^\beta\right)\times \\ _pF_q (a_1, \cdot\cdot\cdot a_p; b_1, \cdot\cdot\cdot b_q; \lambda x^{\gamma})dx+\int x^\alpha\sinh\left(\eta x^\beta\right)\,_pF_q (a_1, \cdot\cdot\cdot a_p; b_1, \cdot\cdot\cdot b_q; \lambda x^{\gamma})dx\\
=x^{\alpha+1}\cosh\left(\eta x^\beta\right)\sum\limits_{j=0}^{\infty}\frac{\left(\beta \eta x^\beta\right)^{2j}}{\prod\limits_{m=0}^{2j}(\alpha+m\beta+1)}\times \\ _{p+j}F_{q+j} \Bigl(a_1, \cdot\cdot\cdot, a_p, \frac{\alpha+1}{\gamma}, \frac{\alpha+\beta+1}{\gamma},\frac{\alpha+2\beta+1}{\gamma}, \cdot\cdot\cdot, \frac{\alpha+2j\beta+1}{\gamma}\\; b_1, \cdot\cdot\cdot, b_q,\frac{\alpha+\gamma+1}{\gamma},\frac{\alpha+\gamma+\beta+1}{\gamma}, \frac{\alpha+\gamma+2\beta+1}{\gamma}, \cdot\cdot\cdot, \frac{\alpha+\gamma+2j\beta+1}{\gamma}; \lambda x^{\gamma}\Bigr)\\-x^{\alpha+1}\sinh\left(\eta x^\beta\right)\sum\limits_{j=0}^{\infty}\frac{\left(\beta \eta x^\beta\right)^{2j+1}}{\prod\limits_{m=0}^{2j+1}(\alpha+m\beta+1)} \times\\ _{p+j}F_{q+j} \Bigl(a_1, \cdot\cdot\cdot, a_p, \frac{\alpha+1}{\gamma}, \frac{\alpha+\beta+1}{\gamma}, \frac{\alpha+2\beta+1}{\gamma}, \cdot\cdot\cdot, \frac{\alpha+(2j+1)\beta+1}{\gamma};\\ b_1, \cdot\cdot\cdot, b_q,\frac{\alpha+\gamma+1}{\gamma}, \frac{\alpha+\gamma+\beta+1}{\gamma}, \frac{\alpha+\gamma+2\beta+1}{\gamma},\cdot\cdot\cdot, \frac{\alpha+\gamma+(2j+1)\beta+1}{\gamma}; \lambda x^{\gamma}\Bigr)
\\+x^{\alpha+1}\sinh\left(\eta x^\beta\right)\sum\limits_{j=0}^{\infty}\frac{\left(\beta \eta x^\beta\right)^{2j}}{\prod\limits_{m=0}^{2j}(\alpha+m\beta+1)}\times \\ _{p+j}F_{q+j} \Bigl(a_1, \cdot\cdot\cdot, a_p, \frac{\alpha+1}{\gamma}, \frac{\alpha+\beta+1}{\gamma},\frac{\alpha+2\beta+1}{\gamma}, \cdot\cdot\cdot, \frac{\alpha+2j\beta+1}{\gamma}\\; b_1, \cdot\cdot\cdot, b_q,\frac{\alpha+\gamma+1}{\gamma},\frac{\alpha+\gamma+\beta+1}{\gamma}, \frac{\alpha+\gamma+2\beta+1}{\gamma}, \cdot\cdot\cdot, \frac{\alpha+\gamma+2j\beta+1}{\gamma}; \lambda x^{\gamma}\Bigr)\\-x^{\alpha+1}\cosh\left(\eta x^\beta\right)\sum\limits_{j=0}^{\infty}\frac{\left(\beta \eta x^\beta\right)^{2j+1}}{\prod\limits_{m=0}^{2j+1}(\alpha+m\beta+1)}\times \\ _{p+j}F_{q+j} \Bigl(a_1, \cdot\cdot\cdot, a_p, \frac{\alpha+1}{\gamma}, \frac{\alpha+\beta+1}{\gamma}, \frac{\alpha+2\beta+1}{\gamma}, \cdot\cdot\cdot, \frac{\alpha+(2j+1)\beta+1}{\gamma}; b_1, \cdot\cdot\cdot, b_q,\\ \frac{\alpha+\gamma+1}{\gamma}, \frac{\alpha+\gamma+\beta+1}{\gamma}, \frac{\alpha+\gamma+2\beta+1}{\gamma},\cdot\cdot\cdot, \frac{\alpha+\gamma+(2j+1)\beta+1}{\gamma}; \lambda x^{\gamma}\Bigr)+C.
\label{eq2.23}
\end{multline}
Hence, comparing (\ref{eq2.23}) with (\ref{eq2.3}) gives (\ref{eq2.22}).
\end{proof}
\subsection{Evaluation of some non-elementary  integrals involving the trigonometric functions $\cos$ and $\sin$ and the generalized hypergeometric function $_pF_q$}
\label{subsec:2.2}

\begin{proposition} For any constants $\alpha, \beta, \eta, \lambda$ and $\gamma$ any nonzero constant ($\gamma\ne0$),
\begin{multline}
\int x^\alpha \cos\left(\eta x^\beta\right)\,_pF_q (a_1, a_2, \cdot\cdot\cdot a_p; b_1, b_2, \cdot\cdot\cdot b_q; \lambda x^{\gamma})dx\\= x^{\alpha+1}\cos\left(\eta x^\beta\right)\sum\limits_{j=0}^{\infty}\frac{(-1)^j\left(\beta \eta x^\beta\right)^{2j}}{\prod\limits_{m=0}^{2j}(\alpha+m\beta+1)}\times \\ _{p+j}F_{q+j} \Bigl(a_1, \cdot\cdot\cdot, a_p, \frac{\alpha+1}{\gamma}, \frac{\alpha+\beta+1}{\gamma},\frac{\alpha+2\beta+1}{\gamma}, \cdot\cdot\cdot, \frac{\alpha+2j\beta+1}{\gamma}\\; b_1, \cdot\cdot\cdot, b_q,\frac{\alpha+\gamma+1}{\gamma},\frac{\alpha+\gamma+\beta+1}{\gamma}, \frac{\alpha+\gamma+2\beta+1}{\gamma}, \cdot\cdot\cdot, \frac{\alpha+\gamma+2j\beta+1}{\gamma}; \lambda x^{\gamma}\Bigr)\\+x^{\alpha+1}\sin\left(\eta x^\beta\right)\sum\limits_{j=0}^{\infty}\frac{(-1)^j\left(\beta \eta x^\beta\right)^{2j+1}}{\prod\limits_{m=0}^{2j+1}(\alpha+m\beta+1)}\times\\\ _{p+j}F_{q+j} \Bigl(a_1, \cdot\cdot\cdot, a_p, \frac{\alpha+1}{\gamma}, \frac{\alpha+\beta+1}{\gamma}, \frac{\alpha+2\beta+1}{\gamma}, \cdot\cdot\cdot,\frac{\alpha+(2j+1)\beta+1}{\gamma}; b_1, \cdot\cdot\cdot, b_q,\\\frac{\alpha+\gamma+1}{\gamma}, \frac{\alpha+\gamma+\beta+1}{\gamma}, \frac{\alpha+\gamma+2\beta+1}{\gamma},\cdot\cdot\cdot, \frac{\alpha+\gamma+(2j+1)\beta+1}{\gamma}; \lambda x^{\gamma}\Bigr)+C.
\label{eq2.24}
\end{multline}
\label{prp4}
\end{proposition}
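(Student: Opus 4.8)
The plan is to follow verbatim the template of the proofs of Propositions \ref{prp1}--\ref{prp3}. First I would make the change of variable $u^\beta=\eta x^\beta$ and insert the series definition (\ref{eq1.1}) of $_pF_q$, which turns the left-hand side of (\ref{eq2.24}) into $\sum_{n=0}^\infty \frac{(a_1)_n\cdots(a_p)_n}{(b_1)_n\cdots(b_q)_n}\frac{\lambda^n}{n!}\,\eta^{-(n\gamma+\alpha+1)/\beta}\int u^{n\gamma+\alpha}\cos(u^\beta)\,du$, reducing everything to the single family of integrals $\int u^{n\gamma+\alpha}\cos(u^\beta)\,du$.

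Next I would perform successive integration by parts that raises the power of $u$, exactly as in (\ref{eq2.10}) and (\ref{eq2.15}). The only genuinely new bookkeeping relative to the hyperbolic case is the sign: the antiderivatives of $\cos$ cycle $\cos\to\sin\to-\cos\to-\sin\to\cos$, so after $2j$ steps one lands on a $\cos(u^\beta)$ term with coefficient $(-1)^j\beta^{2j}$ and after $2j+1$ steps on a $\sin(u^\beta)$ term with coefficient $(-1)^j\beta^{2j+1}$; this is precisely the origin of the $(-1)^j$ factors present in (\ref{eq2.24}) but absent in Propositions \ref{prp2} and \ref{prp3}. Grouping the even and odd terms gives $\int u^{n\gamma+\alpha}\cos(u^\beta)\,du=\cos(u^\beta)\sum_{j\ge0}\frac{(-1)^j\beta^{2j}u^{n\gamma+\alpha+2j\beta+1}}{\prod_{m=0}^{2j}(n\gamma+\alpha+m\beta+1)}+\sin(u^\beta)\sum_{j\ge0}\frac{(-1)^j\beta^{2j+1}u^{n\gamma+\alpha+(2j+1)\beta+1}}{\prod_{m=0}^{2j+1}(n\gamma+\alpha+m\beta+1)}+C$.

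Then I would apply Lemma \ref{lem1} to each product $\prod_{m=0}^{2j}(n\gamma+\alpha+m\beta+1)$ and $\prod_{m=0}^{2j+1}(n\gamma+\alpha+m\beta+1)$ to peel off the $n$-independent factors $\prod_{m}(\alpha+m\beta+1)$ and express the $n$-dependence as a ratio of Pochhammer symbols $\prod_m(\tfrac{\alpha+m\beta+1}{\gamma})_n/\prod_m(\tfrac{\alpha+\gamma+m\beta+1}{\gamma})_n$. Substituting back ($u^\beta=\eta x^\beta$, $u^\gamma/\eta^{\gamma/\beta}=x^\gamma$, $(u/\eta^{\beta})^{\alpha+1}$ handled as in (\ref{eq2.12})) and interchanging the $j$- and $n$-summations, the inner $n$-sum is recognized as a $_{p+j}F_{q+j}$ with exactly the numerator parameters $a_1,\dots,a_p,\tfrac{\alpha+1}{\gamma},\dots,\tfrac{\alpha+r\beta+1}{\gamma}$ and denominator parameters $b_1,\dots,b_q,\tfrac{\alpha+\gamma+1}{\gamma},\dots,\tfrac{\alpha+\gamma+r\beta+1}{\gamma}$ (with $r=2j$ for the $\cos$ series and $r=2j+1$ for the $\sin$ series), yielding (\ref{eq2.24}).

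The main obstacle — really the only delicate point — is the sign-tracking in the integration-by-parts recursion, since a single mistake in the four-cycle flips the $(-1)^j$; beyond that, the justification of term-by-term integration and of the interchange of the two infinite series is the same formal (or, under the convergence hypotheses implicit in the earlier propositions, legitimate) manipulation already used in Propositions \ref{prp1}--\ref{prp3}. As a cross-check one can instead write $\cos(\eta x^\beta)=\tfrac12(e^{i\eta x^\beta}+e^{-i\eta x^\beta})$, apply Proposition \ref{prp1} with $\eta$ replaced by $\pm i\eta$, and take real combinations; after separating $i^j+(-i)^j$ into its even/odd-$j$ contributions this reproduces the same two series, confirming the $(-1)^j$ pattern.
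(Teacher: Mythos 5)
Your proposal is correct and follows essentially the same route the paper intends: the paper omits this proof, stating only that it is "similar to Proposition \ref{prp2}'s proof," and your argument is exactly that template (substitution $u^\beta=\eta x^\beta$, term-by-term expansion, repeated integration by parts with the four-step antiderivative cycle producing the $(-1)^j$ factors, then Lemma \ref{lem1} to repackage the products as Pochhammer ratios). Your sign bookkeeping for the even/odd terms matches (\ref{eq2.24}), and the Euler-identity cross-check you mention is in effect what Theorem \ref{th4} records.
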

Proposition \ref{prp4}'s proof is similar to Proposition \ref{prp2}'s proof, we omit it.

\begin{proposition} For any constants $\alpha, \beta, \eta, \lambda$ and $\gamma$ any nonzero constant ($\gamma\ne0$),
\begin{multline}
\int x^\alpha \sin\left(\eta x^\beta\right)\,_pF_q (a_1, a_2, \cdot\cdot\cdot a_p; b_1, b_2, \cdot\cdot\cdot b_q; \lambda x^{\gamma})dx\\=x^{\alpha+1}\sin\left(\eta x^\beta\right)\sum\limits_{j=0}^{\infty}\frac{(-1)^j\left(\beta \eta x^\beta\right)^{2j}}{\prod\limits_{m=0}^{2j}(\alpha+m\beta+1)}\times \\ _{p+j}F_{q+j} \Bigl(a_1, \cdot\cdot\cdot, a_p, \frac{\alpha+1}{\gamma}, \frac{\alpha+\beta+1}{\gamma},\frac{\alpha+2\beta+1}{\gamma}, \cdot\cdot\cdot, \frac{\alpha+2j\beta+1}{\gamma}; b_1, \cdot\cdot\cdot, b_q,\\ \frac{\alpha+\gamma+1}{\gamma},\frac{\alpha+\gamma+\beta+1}{\gamma}, \frac{\alpha+\gamma+2\beta+1}{\gamma}, \cdot\cdot\cdot, \frac{\alpha+\gamma+2j\beta+1}{\gamma}; \lambda x^{\gamma}\Bigr)\\-x^{\alpha+1}\cos\left(\eta x^\beta\right)\sum\limits_{j=0}^{\infty}\frac{(-1)^j\left(\beta \eta x^\beta\right)^{2j+1}}{\prod\limits_{m=0}^{2j+1}(\alpha+m\beta+1)}\times \\ _{p+j}F_{q+j} \Bigl(a_1, \cdot\cdot\cdot, a_p, \frac{\alpha+1}{\gamma}, \frac{\alpha+\beta+1}{\gamma}, \frac{\alpha+2\beta+1}{\gamma}, \cdot\cdot\cdot, \frac{\alpha+(2j+1)\beta+1}{\gamma}; b_1, \cdot\cdot\cdot, b_q,\\ \frac{\alpha+\gamma+1}{\gamma}, \frac{\alpha+\gamma+\beta+1}{\gamma}, \frac{\alpha+\gamma+2\beta+1}{\gamma},\cdot\cdot\cdot, \frac{\alpha+\gamma+(2j+1)\beta+1}{\gamma}; \lambda x^{\gamma}\Bigr)+C.
\label{eq2.25}
\end{multline}
\label{prp5}
\end{proposition}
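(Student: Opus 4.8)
The plan is to reproduce, essentially verbatim, the argument used for Proposition \ref{prp3} (the $\sinh$ case), replacing the hyperbolic functions by their trigonometric analogues and carrying along the alternating sign $(-1)^j$ that the derivatives of $\cos$ and $\sin$ introduce; this is the trigonometric counterpart of the proof of Proposition \ref{prp4}. First I would make the change of variable $u^\beta=\eta x^\beta$ and expand the hypergeometric factor by its defining series (\ref{eq1.1}), so that, exactly as in (\ref{eq2.14}), the integral becomes $\sum_{n=0}^{\infty}\frac{(a_1)_n\cdots(a_p)_n}{(b_1)_n\cdots(b_q)_n}\frac{\lambda^n}{n!}\,\eta^{-\frac{n\gamma+\alpha+1}{\beta}}\int u^{n\gamma+\alpha}\sin(u^\beta)\,du$, reducing everything to the evaluation of $\int u^{n\gamma+\alpha}\sin(u^\beta)\,du$.

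Next I would iterate integration by parts so as to raise the power of $u$, using $\frac{d}{du}\bigl(u^{a+1}\sin(u^\beta)\bigr)=(a+1)u^a\sin(u^\beta)+\beta u^{a+\beta}\cos(u^\beta)$ together with the companion identity $\frac{d}{du}\bigl(u^{a+1}\cos(u^\beta)\bigr)=(a+1)u^a\cos(u^\beta)-\beta u^{a+\beta}\sin(u^\beta)$, the minus sign in the second one being responsible for the factors $(-1)^j$. Iterating produces the trigonometric analogue of (\ref{eq2.15}),
\[
\int u^{n\gamma+\alpha}\sin(u^\beta)\,du
=\sin(u^\beta)\sum_{j=0}^{\infty}\frac{(-1)^j\beta^{2j}\,u^{n\gamma+\alpha+2j\beta+1}}{\prod_{m=0}^{2j}(n\gamma+\alpha+m\beta+1)}
-\cos(u^\beta)\sum_{j=0}^{\infty}\frac{(-1)^j\beta^{2j+1}\,u^{n\gamma+\alpha+(2j+1)\beta+1}}{\prod_{m=0}^{2j+1}(n\gamma+\alpha+m\beta+1)}+C,
\]
where the even-power family is attached to $\sin$ and the odd-power family to $\cos$.

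Then I would apply Lemma \ref{lem1} to rewrite $\prod_{m=0}^{2j}(n\gamma+\alpha+m\beta+1)$ and $\prod_{m=0}^{2j+1}(n\gamma+\alpha+m\beta+1)$ as $\prod_m(\alpha+m\beta+1)$ times the Pochhammer ratio $\prod_m\bigl(\tfrac{\alpha+m\beta+1}{\gamma}\bigr)_n\big/\prod_m\bigl(\tfrac{\alpha+\gamma+m\beta+1}{\gamma}\bigr)_n$, exactly as in the step from (\ref{eq2.15}) to (\ref{eq2.16}). Substituting this into the $n$-series, undoing the change of variable (using $u^{n\gamma+\alpha+1}\eta^{-\frac{n\gamma+\alpha+1}{\beta}}=x^{\alpha+1}(x^\gamma)^n$, $u^\beta=\eta x^\beta$, and $\lambda u^\gamma\eta^{-\gamma/\beta}=\lambda x^\gamma$), and interchanging the $j$- and $n$-summations, the inner sum over $n$ is by (\ref{eq1.1}) precisely $_{p+j}F_{q+j}$ with $\tfrac{\alpha+m\beta+1}{\gamma}$ ($0\le m\le 2j$ or $2j+1$) adjoined to the numerator parameters and $\tfrac{\alpha+\gamma+m\beta+1}{\gamma}$ adjoined to the denominator parameters; collecting the two families yields (\ref{eq2.25}).

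The main difficulty here is bookkeeping rather than anything conceptual: one must keep the even-power/$\sin$ and odd-power/$\cos$ families correctly paired, match each with its sign $(-1)^j$, and track the lengths ($2j$ versus $2j+1$) of the adjoined parameter lists; one also needs to justify the termwise integration by parts and the interchange of the two infinite sums, which is legitimate on the interior of the common domain of convergence of the series involved. As a cross-check one can bypass the integration by parts entirely: apply Proposition \ref{prp1} with $\eta$ replaced by $i\eta$ and by $-i\eta$, form $\tfrac{1}{2i}$ times the difference using $\sin(\eta x^\beta)=\bigl(e^{i\eta x^\beta}-e^{-i\eta x^\beta}\bigr)/(2i)$; the $e^{\pm i\eta x^\beta}$ recombine into $\cos$ and $\sin$, the powers $(\mp i\beta\eta x^\beta)^j$ split the $j$-sum into even and odd parts, and $i^{2j}=(-1)^j$ reproduces (\ref{eq2.25}). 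I expect the direct argument, parallel to the omitted proof of Proposition \ref{prp4}, to be the intended one.
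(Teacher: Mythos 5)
Your proposal is correct and follows exactly the route the paper intends: the paper omits the proof of Proposition \ref{prp5}, stating it is similar to that of Proposition \ref{prp3}, and your argument (substitution $u^\beta=\eta x^\beta$, series expansion via (\ref{eq1.1}), repeated integration by parts producing the alternating $\sin$/$\cos$ families with signs $(-1)^j$, then Lemma \ref{lem1} to reassemble the $n$-sums into $_{p+j}F_{q+j}$) is precisely that proof transcribed to the trigonometric case. The Euler-identity cross-check you mention is also consistent with the paper, being essentially the comparison made in Theorem \ref{th5}.
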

Proposition \ref{prp5}'s proof is similar to Proposition \ref{prp3}'s proof, we also omit it.

\begin{theorem} For any constants $\alpha, \beta, \eta, \lambda$ and $\gamma$ any nonzero constant ($\gamma\ne0$),
\begin{multline}
\cos\left(\eta x^\beta\right)\sum\limits_{j=0}^{\infty}\frac{(-1)^j\left(\beta \eta x^\beta\right)^{2j}}{\prod\limits_{m=0}^{2j}(\alpha+m\beta+1)}\times \\ _{p+j}F_{q+j} \Bigl(a_1, \cdot\cdot\cdot, a_p, \frac{\alpha+1}{\gamma}, \frac{\alpha+\beta+1}{\gamma},\frac{\alpha+2\beta+1}{\gamma}, \cdot\cdot\cdot, \frac{\alpha+2j\beta+1}{\gamma}\\; b_1, \cdot\cdot\cdot, b_q,\frac{\alpha+\gamma+1}{\gamma},\frac{\alpha+\gamma+\beta+1}{\gamma}, \frac{\alpha+\gamma+2\beta+1}{\gamma}, \cdot\cdot\cdot, \frac{\alpha+\gamma+2j\beta+1}{\gamma}; \lambda x^{\gamma}\Bigr)\\+\sin\left(\eta x^\beta\right)\sum\limits_{j=0}^{\infty}\frac{(-1)^j\left(\beta \eta x^\beta\right)^{2j+1}}{\prod\limits_{m=0}^{2j+1}(\alpha+m\beta+1)}\times \\ _{p+j}F_{q+j} \Bigl(a_1, \cdot\cdot\cdot, a_p, \frac{\alpha+1}{\gamma}, \frac{\alpha+\beta+1}{\gamma}, \frac{\alpha+2\beta+1}{\gamma}, \cdot\cdot\cdot,\frac{\alpha+(2j+1)\beta+1}{\gamma}; b_1,\cdot\cdot\cdot, b_q,\\ \frac{\alpha+\gamma+1}{\gamma}, \frac{\alpha+\gamma+\beta+1}{\gamma}, \frac{\alpha+\gamma+2\beta+1}{\gamma},\cdot\cdot\cdot, \frac{\alpha+\gamma+(2j+1)\beta+1}{\gamma}; \lambda x^{\gamma}\Bigr).\\
=\frac{1}{2}\Bigl[e^{i\eta x^\beta}\sum\limits_{j=0}^{\infty}\frac{\left(-i\beta \eta x^\beta\right)^j}{\prod\limits_{m=0}^{j}(\alpha+m\beta+1)}\,_{p+j}F_{q+j} \Bigl(a_1, \cdot\cdot\cdot, a_p, \frac{\alpha+1}{\gamma}, \frac{\alpha+\beta+1}{\gamma}, \cdot\cdot\cdot, \frac{\alpha+j\beta+1}{\gamma}\\; b_1, \cdot\cdot\cdot, b_q,\frac{\alpha+\gamma+1}{\gamma}, \frac{\alpha+\gamma+\beta+1}{\gamma}, \cdot\cdot\cdot, \frac{\alpha+\gamma+j\beta+1}{\gamma}; \lambda x^{\gamma}\bigr)\\
\\+e^{-i\eta x^\beta}\sum\limits_{j=0}^{\infty}\frac{\left(i\beta \eta x^\beta\right)^j}{\prod\limits_{m=0}^{j}(\alpha+m\beta+1)}\,_{p+j}F_{q+j} \Bigl(a_1, \cdot\cdot\cdot, a_p, \frac{\alpha+1}{\gamma}, \frac{\alpha+\beta+1}{\gamma}, \cdot\cdot\cdot, \frac{\alpha+j\beta+1}{\gamma}\\; b_1, \cdot\cdot\cdot, b_q,\frac{\alpha+\gamma+1}{\gamma}, \frac{\alpha+\gamma+\beta+1}{\gamma}, \cdot\cdot\cdot, \frac{\alpha+\gamma+j\beta+1}{\gamma}; \lambda x^{\gamma}\Bigr)\Bigr].
\label{eq2.26}
\end{multline}
\label{th4}
\end{theorem}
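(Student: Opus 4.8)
The plan is to imitate the proof of Theorem~\ref{th1}, using Euler's identity $\cos\left(\eta x^\beta\right)=\left(e^{i\eta x^\beta}+e^{-i\eta x^\beta}\right)/2$ in place of the hyperbolic identity. First I would split the integral
\[
\int x^\alpha\cos\left(\eta x^\beta\right){}_pF_q\bigl(a_1,\cdots,a_p;b_1,\cdots,b_q;\lambda x^\gamma\bigr)dx
=\frac12\int x^\alpha e^{i\eta x^\beta}{}_pF_q\,dx+\frac12\int x^\alpha e^{-i\eta x^\beta}{}_pF_q\,dx,
\]
where the hypergeometric arguments are the same in all three integrands, and then apply Proposition~\ref{prp1} to each of the two integrals on the right, once with $\eta$ replaced by $i\eta$ and once with $\eta$ replaced by $-i\eta$. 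The first yields the series with prefactor $e^{i\eta x^\beta}$ and powers $\left(-i\beta\eta x^\beta\right)^{j}$, the second the series with prefactor $e^{-i\eta x^\beta}$ and powers $\left(i\beta\eta x^\beta\right)^{j}$; adding the two and dividing through by the common factor $x^{\alpha+1}$ produces exactly the right-hand side of (\ref{eq2.26}).

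Next I would evaluate the very same integral a second way, directly by Proposition~\ref{prp4}, that is by (\ref{eq2.24}), which expresses it as $x^{\alpha+1}$ times the left-hand side of (\ref{eq2.26}) plus a constant. Since an indefinite integral is determined only up to an additive constant, the two evaluations must agree; equating them, cancelling the factor $x^{\alpha+1}$, and absorbing the constant of integration gives (\ref{eq2.26}) and completes the proof.

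The argument is entirely routine once Propositions~\ref{prp1} and \ref{prp4} are in hand; the only point meriting a remark — and what I would flag as the ``hard part'' — is the legitimacy of feeding the purely imaginary parameters $\pm i\eta$ into Proposition~\ref{prp1}, together with the attendant sign bookkeeping. The former is harmless: the derivation of (\ref{eq2.3}) — the change of variable $u^\beta=\eta x^\beta$, the repeated integration by parts (\ref{eq2.5}), and Lemma~\ref{lem1} — nowhere uses reality or positivity of $\eta$, so (\ref{eq2.3}) holds verbatim with $\eta\mapsto\pm i\eta$, the symbols $e^{\pm i\eta x^\beta}$ being understood through their power series. The latter amounts to tracking how the factor $(-1)^{j}$ appearing in (\ref{eq2.24}) arises, namely as $i^{2j}=(-1)^{j}$ upon expanding $\left(\mp i\beta\eta x^\beta\right)^{j}$, and to checking that the separation of (\ref{eq2.24}) into its $\cos$-part (the even powers $2j$) and its $\sin$-part (the odd powers $2j+1$) corresponds precisely to the real and imaginary recombination $\cos=\tfrac12(e^{i\,\cdot}+e^{-i\,\cdot})$ of the two exponential series on the right of (\ref{eq2.26}). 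No genuine obstacle arises beyond this verification.
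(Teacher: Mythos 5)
Your proposal is correct and follows essentially the same route as the paper: the paper's proof likewise splits the integral via Euler's identity $\cos\left(\eta x^\beta\right)=\left(e^{i\eta x^\beta}+e^{-i\eta x^\beta}\right)/2$, applies Proposition~\ref{prp1} with $\eta$ replaced by $\pm i\eta$, and compares the result with Proposition~\ref{prp4}. Your added remarks on the legitimacy of the imaginary parameter and the cancellation of $x^{\alpha+1}$ are sound and, if anything, more careful than the paper's own exposition.
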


\begin{proof}
Using Euler's identity $ \cos\left(\eta x^\beta\right)= \left( e^{i\eta x^\beta}+ e^{-i\eta x^\beta}\right)/2$ and Proposition \ref{prp1} yields
\begin{multline}
\int x^\alpha\cos\left(\eta x^\beta\right)\,_pF_q (a_1, \cdot\cdot\cdot a_p; b_1, \cdot\cdot\cdot b_q; \lambda x^{\gamma})dx= \frac{1}{2}\Bigl[\\ \int x^\alpha e^{i\eta x^\beta}\,_pF_q (a_1, \cdot\cdot\cdot a_p; b_1, \cdot\cdot\cdot b_q; \lambda x^{\gamma})dx+\int x^\alpha e^{-i\eta x^\beta}\,_pF_q (a_1, \cdot\cdot\cdot a_p; b_1, \cdot\cdot\cdot b_q; \lambda x^{\gamma})dx\Bigr]\\
=\frac{1}{2}\Bigl[e^{i\eta x^\beta}\sum\limits_{j=0}^{\infty}\frac{\left(-i\beta \eta x^\beta\right)^j}{\prod\limits_{m=0}^{j}(\alpha+m\beta+1)}\,_{p+j}F_{q+j} \Bigl(a_1, \cdot\cdot\cdot, a_p, \frac{\alpha+1}{\gamma}, \frac{\alpha+\beta+1}{\gamma}, \cdot\cdot\cdot, \frac{\alpha+j\beta+1}{\gamma}\\; b_1, \cdot\cdot\cdot, b_q,\frac{\alpha+\gamma+1}{\gamma}, \frac{\alpha+\gamma+\beta+1}{\gamma}, \cdot\cdot\cdot, \frac{\alpha+\gamma+j\beta+1}{\gamma}; \lambda x^{\gamma}\bigr)\\
\\+e^{-i\eta x^\beta}\sum\limits_{j=0}^{\infty}\frac{\left(i\beta \eta x^\beta\right)^j}{\prod\limits_{m=0}^{j}(\alpha+m\beta+1)}\,_{p+j}F_{q+j} \Bigl(a_1, \cdot\cdot\cdot, a_p, \frac{\alpha+1}{\gamma}, \frac{\alpha+\beta+1}{\gamma}, \cdot\cdot\cdot, \frac{\alpha+j\beta+1}{\gamma}\\; b_1, \cdot\cdot\cdot, b_q,\frac{\alpha+\gamma+1}{\gamma}, \frac{\alpha+\gamma+\beta+1}{\gamma}, \cdot\cdot\cdot, \frac{\alpha+\gamma+j\beta+1}{\gamma}; \lambda x^{\gamma}\Bigr)\Bigr]+C.
\label{eq2.27}
\end{multline}
Hence, Comparing (\ref{eq2.27}) with (\ref{eq2.24}) gives (\ref{eq2.26}). 
\end{proof}

\begin{theorem} For any constants $\alpha, \beta, \eta, \lambda$ and $\gamma$ any nonzero constant ($\gamma\ne0$),
\begin{multline}
\sin\left(\eta x^\beta\right)\sum\limits_{j=0}^{\infty}\frac{(-1)^j\left(\beta \eta x^\beta\right)^{2j}}{\prod\limits_{m=0}^{2j}(\alpha+m\beta+1)}\times \\_{p+j}F_{q+j} \Bigl(a_1, \cdot\cdot\cdot, a_p, \frac{\alpha+1}{\gamma}, \frac{\alpha+\beta+1}{\gamma},\frac{\alpha+2\beta+1}{\gamma}, \cdot\cdot\cdot, \frac{\alpha+2j\beta+1}{\gamma}\\; b_1, \cdot\cdot\cdot, b_q,\frac{\alpha+\gamma+1}{\gamma},\frac{\alpha+\gamma+\beta+1}{\gamma}, \frac{\alpha+\gamma+2\beta+1}{\gamma}, \cdot\cdot\cdot, \frac{\alpha+\gamma+2j\beta+1}{\gamma}; \lambda x^{\gamma}\Bigr)\\-\cos\left(\eta x^\beta\right)\sum\limits_{j=0}^{\infty}\frac{(-1)^j\left(\beta \eta x^\beta\right)^{2j+1}}{\prod\limits_{m=0}^{2j+1}(\alpha+m\beta+1)}\times\\ _{p+j}F_{q+j} \Bigl(a_1, \cdot\cdot\cdot, a_p, \frac{\alpha+1}{\gamma}, \frac{\alpha+\beta+1}{\gamma}, \frac{\alpha+2\beta+1}{\gamma}, \cdot\cdot\cdot, \frac{\alpha+(2j+1)\beta+1}{\gamma}; b_1, \cdot\cdot\cdot, b_q,\\ \frac{\alpha+\gamma+1}{\gamma}, \frac{\alpha+\gamma+\beta+1}{\gamma}, \frac{\alpha+\gamma+2\beta+1}{\gamma},\cdot\cdot\cdot, \frac{\alpha+\gamma+(2j+1)\beta+1}{\gamma}; \lambda x^{\gamma}\Bigr).\\ 
=\frac{1}{2i}\Bigl[e^{i\eta x^\beta}\sum\limits_{j=0}^{\infty}\frac{\left(-i\beta \eta x^\beta\right)^j}{\prod\limits_{m=0}^{j}(\alpha+m\beta+1)}\,_{p+j}F_{q+j} \Bigl(a_1, \cdot\cdot\cdot, a_p, \frac{\alpha+1}{\gamma}, \frac{\alpha+\beta+1}{\gamma}, \cdot\cdot\cdot, \frac{\alpha+j\beta+1}{\gamma}\\; b_1, \cdot\cdot\cdot, b_q,\frac{\alpha+\gamma+1}{\gamma}, \frac{\alpha+\gamma+\beta+1}{\gamma}, \cdot\cdot\cdot, \frac{\alpha+\gamma+j\beta+1}{\gamma}; \lambda x^{\gamma}\Bigr)\\
-e^{-i\eta x^\beta}\sum\limits_{j=0}^{\infty}\frac{\left(i\beta \eta x^\beta\right)^j}{\prod\limits_{m=0}^{j}(\alpha+m\beta+1)}\,_{p+j}F_{q+j} \Bigl(a_1, \cdot\cdot\cdot, a_p, \frac{\alpha+1}{\gamma}, \frac{\alpha+\beta+1}{\gamma}, \cdot\cdot\cdot, \frac{\alpha+j\beta+1}{\gamma}\\; b_1, \cdot\cdot\cdot, b_q,\frac{\alpha+\gamma+1}{\gamma}, \frac{\alpha+\gamma+\beta+1}{\gamma}, \cdot\cdot\cdot, \frac{\alpha+\gamma+j\beta+1}{\gamma}; \lambda x^{\gamma}\Bigr)\Bigr].
\label{eq2.28}
\end{multline}
\label{th5}
\end{theorem}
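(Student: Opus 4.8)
The plan is to mirror the derivation of Theorems~\ref{th1}, \ref{th2} and especially \ref{th4}: I would evaluate the single indefinite integral $\int x^\alpha \sin\left(\eta x^\beta\right)\,_pF_q (a_1,\cdots,a_p;b_1,\cdots,b_q;\lambda x^{\gamma})\,dx$ in two different ways and then equate the two results.

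First I would use Euler's identity $\sin\left(\eta x^\beta\right)=\bigl(e^{i\eta x^\beta}-e^{-i\eta x^\beta}\bigr)/(2i)$ to write
\[
\int x^\alpha \sin\left(\eta x^\beta\right)\,_pF_q\,dx=\frac{1}{2i}\left[\int x^\alpha e^{i\eta x^\beta}\,_pF_q\,dx-\int x^\alpha e^{-i\eta x^\beta}\,_pF_q\,dx\right],
\]
which is legitimate because $i\eta$ and $-i\eta$ are again admissible (arbitrary, possibly complex) values of the parameter in Proposition~\ref{prp1}. Applying Proposition~\ref{prp1} with $\eta$ replaced by $i\eta$ yields the prefactor $e^{i\eta x^\beta}$ together with the series in which $\left(-\beta\eta x^\beta\right)^j$ is replaced by $\left(-i\beta\eta x^\beta\right)^j$; applying it with $\eta$ replaced by $-i\eta$ yields the prefactor $e^{-i\eta x^\beta}$ and $\left(i\beta\eta x^\beta\right)^j$. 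The key point is that the numerator and denominator parameters of each $_{p+j}F_{q+j}$ appearing in~(\ref{eq2.3}) depend only on $\alpha,\beta,\gamma$ and not on $\eta$, so they are untouched by these substitutions; hence the combination above is exactly $x^{\alpha+1}$ times the right-hand side of~(\ref{eq2.28}), up to an additive constant of integration.

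Second, I would invoke Proposition~\ref{prp5}, which already evaluates the same indefinite integral~(\ref{eq2.25}) as $x^{\alpha+1}$ times the left-hand side of~(\ref{eq2.28}), again up to a constant. Since both expressions are antiderivatives of $x^\alpha \sin\left(\eta x^\beta\right)\,_pF_q$, comparing them — precisely as~(\ref{eq2.27}) is compared with~(\ref{eq2.24}) in the proof of Theorem~\ref{th4} — and cancelling the common factor $x^{\alpha+1}$ and the matching constant gives~(\ref{eq2.28}).

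There is no deep obstacle here; the statement is essentially a corollary of Propositions~\ref{prp1} and~\ref{prp5}, and the effort is bookkeeping. The one step demanding care is the phase tracking in the substitutions $\eta\mapsto\pm i\eta$: one must verify that the $j$th term of the first series carries $\left(-i\beta\eta x^\beta\right)^j$ while that of the second carries $\left(+i\beta\eta x^\beta\right)^j$, and that the prefactor $1/(2i)$ distributes so that, after the comparison, the real functions $\cos\left(\eta x^\beta\right)$ and $\sin\left(\eta x^\beta\right)$ (rather than $e^{\pm i\eta x^\beta}$) emerge on the left of~(\ref{eq2.28}) with exactly the indicated signs. As throughout Section~\ref{sec:2}, the interchange of the $j$- and $n$-summations is taken for granted wherever $_pF_q$ is entire or one stays within the radius of convergence of the series involved.
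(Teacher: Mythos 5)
Your proposal is correct and follows essentially the same route as the paper: the paper's proof likewise applies Euler's identity $\sin\left(\eta x^\beta\right)=\bigl(e^{i\eta x^\beta}-e^{-i\eta x^\beta}\bigr)/(2i)$ together with Proposition~\ref{prp1} (with $\eta\mapsto\pm i\eta$) to obtain (\ref{eq2.29}), and then compares that with Proposition~\ref{prp5}'s evaluation (\ref{eq2.25}) to conclude (\ref{eq2.28}). Your remarks on phase tracking and the cancellation of $x^{\alpha+1}$ match the paper's (implicit) bookkeeping.
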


\begin{proof}
Using Euler's identity $ \sin\left(\eta x^\beta\right)= \left( e^{i\eta x^\beta}- e^{-i\eta x^\beta}\right)/(2i)$ and Proposition \ref{prp1} yields
\begin{multline}
\int x^\alpha\sin\left(\eta x^\beta\right)\,_pF_q (a_1, \cdot\cdot\cdot a_p; b_1, \cdot\cdot\cdot b_q; \lambda x^{\gamma})dx= \frac{1}{2i}\Bigl[\\\int x^\alpha e^{i\eta x^\beta}\,_pF_q (a_1, \cdot\cdot\cdot a_p; b_1, \cdot\cdot\cdot b_q; \lambda x^{\gamma})dx-\int x^\alpha e^{-i\eta x^\beta}\,_pF_q (a_1, \cdot\cdot\cdot a_p; b_1, \cdot\cdot\cdot b_q; \lambda x^{\gamma})dx\Bigr]\\
=\frac{1}{2i}\Bigl[e^{i\eta x^\beta}\sum\limits_{j=0}^{\infty}\frac{\left(-i\beta \eta x^\beta\right)^j}{\prod\limits_{m=0}^{j}(\alpha+m\beta+1)}\,_{p+j}F_{q+j} \Bigl(a_1, \cdot\cdot\cdot, a_p, \frac{\alpha+1}{\gamma}, \frac{\alpha+\beta+1}{\gamma}, \cdot\cdot\cdot, \frac{\alpha+j\beta+1}{\gamma}\\; b_1, \cdot\cdot\cdot, b_q,\frac{\alpha+\gamma+1}{\gamma}, \frac{\alpha+\gamma+\beta+1}{\gamma}, \cdot\cdot\cdot, \frac{\alpha+\gamma+j\beta+1}{\gamma}; \lambda x^{\gamma}\bigr)\\
\\-e^{-i\eta x^\beta}\sum\limits_{j=0}^{\infty}\frac{\left(i\beta \eta x^\beta\right)^j}{\prod\limits_{m=0}^{j}(\alpha+m\beta+1)}\,_{p+j}F_{q+j} \Bigl(a_1, \cdot\cdot\cdot, a_p, \frac{\alpha+1}{\gamma}, \frac{\alpha+\beta+1}{\gamma}, \cdot\cdot\cdot, \frac{\alpha+j\beta+1}{\gamma}\\; b_1, \cdot\cdot\cdot, b_q,\frac{\alpha+\gamma+1}{\gamma}, \frac{\alpha+\gamma+\beta+1}{\gamma}, \cdot\cdot\cdot, \frac{\alpha+\gamma+j\beta+1}{\gamma}; \lambda x^{\gamma}\Bigr)\Bigr]+C.
\label{eq2.29}
\end{multline}
Hence, Comparing (\ref{eq2.29}) with (\ref{eq2.25}) gives (\ref{eq2.28}). 
\end{proof}

\begin{theorem} For any constants $\alpha, \beta, \eta, \lambda$ and $\gamma$ any nonzero constant ($\gamma\ne0$),
\begin{multline}
e^{i\eta x^\beta}\sum\limits_{j=0}^{\infty}\frac{\left(-i\beta \eta x^\beta\right)^j}{\prod\limits_{m=0}^{j}(\alpha+m\beta+1)}\,_{p+j}F_{q+j} \Bigl(a_1, \cdot\cdot\cdot, a_p, \frac{\alpha+1}{\gamma}, \frac{\alpha+\beta+1}{\gamma}, \cdot\cdot\cdot, \frac{\alpha+j\beta+1}{\gamma}\\; b_1, \cdot\cdot\cdot, b_q,\frac{\alpha+\gamma+1}{\gamma}, \frac{\alpha+\gamma+\beta+1}{\gamma}, \cdot\cdot\cdot, \frac{\alpha+\gamma+j\beta+1}{\gamma}; \lambda x^{\gamma}\Bigr)\\
=\cos\left(\eta x^\beta\right)\sum\limits_{j=0}^{\infty}\frac{(-1)^j\left(\beta \eta x^\beta\right)^{2j}}{\prod\limits_{m=0}^{2j}(\alpha+m\beta+1)}\times \\_{p+j}F_{q+j} \Bigl(a_1, \cdot\cdot\cdot, a_p, \frac{\alpha+1}{\gamma}, \frac{\alpha+\beta+1}{\gamma},\frac{\alpha+2\beta+1}{\gamma}, \cdot\cdot\cdot, \frac{\alpha+2j\beta+1}{\gamma}\\; b_1, \cdot\cdot\cdot, b_q,\frac{\alpha+\gamma+1}{\gamma},\frac{\alpha+\gamma+\beta+1}{\gamma}, \frac{\alpha+\gamma+2\beta+1}{\gamma}, \cdot\cdot\cdot, \frac{\alpha+\gamma+2j\beta+1}{\gamma}; \lambda x^{\gamma}\Bigr)\\+\sin\left(\eta x^\beta\right)\sum\limits_{j=0}^{\infty}\frac{(-1)^j\left(\beta \eta x^\beta\right)^{2j+1}}{\prod\limits_{m=0}^{2j+1}(\alpha+m\beta+1)}\times \\ _{p+j}F_{q+j} \Bigl(a_1, \cdot\cdot\cdot, a_p, \frac{\alpha+1}{\gamma}, \frac{\alpha+\beta+1}{\gamma}, \frac{\alpha+2\beta+1}{\gamma}, \cdot\cdot\cdot, \frac{\alpha+(2j+1)\beta+1}{\gamma};\\ b_1, \cdot\cdot\cdot, b_q,\frac{\alpha+\gamma+1}{\gamma}, \frac{\alpha+\gamma+\beta+1}{\gamma}, \frac{\alpha+\gamma+2\beta+1}{\gamma},\cdot\cdot\cdot, \frac{\alpha+\gamma+(2j+1)\beta+1}{\gamma}; \lambda x^{\gamma}\Bigr).\\ 
+i\sin\left(\eta x^\beta\right)\sum\limits_{j=0}^{\infty}\frac{(-1)^j\left(\beta \eta x^\beta\right)^{2j}}{\prod\limits_{m=0}^{2j}(\alpha+m\beta+1)}\times \\ _{p+j}F_{q+j} \Bigl(a_1, \cdot\cdot\cdot, a_p, \frac{\alpha+1}{\gamma}, \frac{\alpha+\beta+1}{\gamma},\frac{\alpha+2\beta+1}{\gamma}, \cdot\cdot\cdot, \frac{\alpha+2j\beta+1}{\gamma}\\; b_1, \cdot\cdot\cdot, b_q,\frac{\alpha+\gamma+1}{\gamma},\frac{\alpha+\gamma+\beta+1}{\gamma}, \frac{\alpha+\gamma+2\beta+1}{\gamma}, \cdot\cdot\cdot, \frac{\alpha+\gamma+2j\beta+1}{\gamma}; \lambda x^{\gamma}\Bigr)\\-i\cos\left(\eta x^\beta\right)\sum\limits_{j=0}^{\infty}\frac{(-1)^j\left(\beta \eta x^\beta\right)^{2j+1}}{\prod\limits_{m=0}^{2j+1}(\alpha+m\beta+1)}\times \\ _{p+j}F_{q+j} \Bigl(a_1, \cdot\cdot\cdot, a_p, \frac{\alpha+1}{\gamma}, \frac{\alpha+\beta+1}{\gamma}, \frac{\alpha+2\beta+1}{\gamma}, \cdot\cdot\cdot, \frac{\alpha+(2j+1)\beta+1}{\gamma}; \\ b_1, \cdot\cdot\cdot, b_q,\frac{\alpha+\gamma+1}{\gamma}, \frac{\alpha+\gamma+\beta+1}{\gamma}, \frac{\alpha+\gamma+2\beta+1}{\gamma},\cdot\cdot\cdot, \frac{\alpha+\gamma+(2j+1)\beta+1}{\gamma}; \lambda x^{\gamma}\Bigr).\\ 
\label{eq2.30}
\end{multline}
\label{th6}
\end{theorem}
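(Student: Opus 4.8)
The plan is to follow verbatim the strategy already used for Theorem~\ref{th3}, with the hyperbolic splitting $e^{\eta x^\beta}=\cosh(\eta x^\beta)+\sinh(\eta x^\beta)$ replaced by Euler's identity $e^{i\eta x^\beta}=\cos(\eta x^\beta)+i\sin(\eta x^\beta)$, and with Propositions~\ref{prp2} and~\ref{prp3} replaced by Propositions~\ref{prp4} and~\ref{prp5}. Concretely, I would evaluate the single indefinite integral $\int x^\alpha e^{i\eta x^\beta}\,_pF_q(a_1,\dots,a_p;b_1,\dots,b_q;\lambda x^{\gamma})\,dx$ in two independent ways and then equate the two outcomes.

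First I would apply Proposition~\ref{prp1} with the constant $\eta$ replaced by $i\eta$. Since Proposition~\ref{prp1} is valid for arbitrary constants, this is legitimate, and it produces the antiderivative in the form $x^{\alpha+1}e^{i\eta x^\beta}$ times exactly the series that stands on the left-hand side of (\ref{eq2.30}); indeed, under $\eta\mapsto i\eta$ the factor $(-\beta\eta x^\beta)^j$ turns into $(-i\beta\eta x^\beta)^j$, while all hypergeometric parameters $\frac{\alpha+m\beta+1}{\gamma}$ and $\frac{\alpha+\gamma+m\beta+1}{\gamma}$ are left untouched because $\gamma$ is not altered. Second, I would insert Euler's identity into the integrand to rewrite the same integral as $\int x^\alpha\cos(\eta x^\beta)\,_pF_q\,dx + i\int x^\alpha\sin(\eta x^\beta)\,_pF_q\,dx$, evaluate the first term by Proposition~\ref{prp4} and the second by $i$ times Proposition~\ref{prp5}, and add. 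The result is $x^{\alpha+1}$ times the four-series expression on the right-hand side of (\ref{eq2.30}): the two series carrying $\cos(\eta x^\beta)$ and $\sin(\eta x^\beta)$ come from Proposition~\ref{prp4}, while the two series carrying $i\sin(\eta x^\beta)$ and $-i\cos(\eta x^\beta)$ come from $i$ times Proposition~\ref{prp5}.

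Since the two computations evaluate one and the same indefinite integral, the two expressions can differ only by the arbitrary constant of integration; as each is the explicit product of the common factor $x^{\alpha+1}$ with a power series, comparing the non-constant parts — exactly as (\ref{eq2.23}) is compared with (\ref{eq2.3}) in the proof of Theorem~\ref{th3} — and cancelling $x^{\alpha+1}$ yields (\ref{eq2.30}).

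The only step requiring care is the bookkeeping. One must check that the numerator parameter lists $\frac{\alpha+1}{\gamma},\frac{\alpha+\beta+1}{\gamma},\dots$ and the denominator lists $\frac{\alpha+\gamma+1}{\gamma},\frac{\alpha+\gamma+\beta+1}{\gamma},\dots$ produced by Propositions~\ref{prp4} and~\ref{prp5} match index by index those produced by Proposition~\ref{prp1} (the even/odd truncation indices $2j$, $2j+1$ versus $j$ align correctly when the two pieces are recombined), and that, once the substitution $\eta\mapsto i\eta$ is undone via $i^{2j}=(-1)^j$ and $i^{2j+1}=i(-1)^j$, the powers of $\beta\eta x^\beta$ and the signs are consistent with the factors $(-1)^j$ appearing in Propositions~\ref{prp4} and~\ref{prp5}. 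I do not expect any genuine obstacle: once Propositions~\ref{prp1}, \ref{prp4} and~\ref{prp5} are in hand, the argument is purely formal and mirrors the proof of Theorem~\ref{th3} line for line.
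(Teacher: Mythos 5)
Your proposal is correct and matches the paper's own proof of Theorem~\ref{th6} essentially line for line: the paper likewise expands $\int x^\alpha e^{i\eta x^\beta}\,_pF_q\,dx$ via $e^{i\eta x^\beta}=\cos(\eta x^\beta)+i\sin(\eta x^\beta)$ together with Propositions~\ref{prp4} and~\ref{prp5}, and then compares the result with (\ref{eq2.3}) after the substitution $\eta\mapsto i\eta$. No gaps.
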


\begin{proof}
Using the relation  $ e^{i\eta x^\beta}=\cos\left(\eta x^\beta\right)+i\sin\left(\eta x^\beta\right)$ and  Propositions \ref{prp4} and \ref{prp5} yields
\begin{multline}
\int x^\alpha e^{i\eta x^\beta}\,_pF_q (a_1, \cdot\cdot\cdot a_p; b_1, \cdot\cdot\cdot b_q; \lambda x^{\gamma})dx= \int x^\alpha\cos\left(\eta x^\beta\right)\times \\_pF_q (a_1, \cdot\cdot\cdot a_p; b_1, \cdot\cdot\cdot b_q; \lambda x^{\gamma})dx+i\int x^\alpha\sin\left(\eta x^\beta\right)\,_pF_q (a_1, \cdot\cdot\cdot a_p; b_1, \cdot\cdot\cdot b_q; \lambda x^{\gamma})dx\\
=x^{\alpha+1}\cos\left(\eta x^\beta\right)\sum\limits_{j=0}^{\infty}\frac{(-1)^j\left(\beta \eta x^\beta\right)^{2j}}{\prod\limits_{m=0}^{2j}(\alpha+m\beta+1)}\times \\ _{p+j}F_{q+j} \Bigl(a_1, \cdot\cdot\cdot, a_p, \frac{\alpha+1}{\gamma}, \frac{\alpha+\beta+1}{\gamma},\frac{\alpha+2\beta+1}{\gamma}, \cdot\cdot\cdot, \frac{\alpha+2j\beta+1}{\gamma}\\; b_1, \cdot\cdot\cdot, b_q,\frac{\alpha+\gamma+1}{\gamma},\frac{\alpha+\gamma+\beta+1}{\gamma}, \frac{\alpha+\gamma+2\beta+1}{\gamma}, \cdot\cdot\cdot, \frac{\alpha+\gamma+2j\beta+1}{\gamma}; \lambda x^{\gamma}\Bigr)\\+x^{\alpha+1}\sin\left(\eta x^\beta\right)\sum\limits_{j=0}^{\infty}\frac{(-1)^j\left(\beta \eta x^\beta\right)^{2j+1}}{\prod\limits_{m=0}^{2j+1}(\alpha+m\beta+1)}\times \\ _{p+j}F_{q+j} \Bigl(a_1, \cdot\cdot\cdot, a_p, \frac{\alpha+1}{\gamma}, \frac{\alpha+\beta+1}{\gamma}, \frac{\alpha+2\beta+1}{\gamma}, \cdot\cdot\cdot, \frac{\alpha+(2j+1)\beta+1}{\gamma};\\ b_1, \cdot\cdot\cdot, b_q,\frac{\alpha+\gamma+1}{\gamma}, \frac{\alpha+\gamma+\beta+1}{\gamma}, \frac{\alpha+\gamma+2\beta+1}{\gamma},\cdot\cdot\cdot, \frac{\alpha+\gamma+(2j+1)\beta+1}{\gamma}; \lambda x^{\gamma}\Bigr)
\\+i\,x^{\alpha+1}\sin\left(\eta x^\beta\right)\sum\limits_{j=0}^{\infty}\frac{(-1)^j\left(\beta \eta x^\beta\right)^{2j}}{\prod\limits_{m=0}^{2j}(\alpha+m\beta+1)}\times \\ _{p+j}F_{q+j} \Bigl(a_1, \cdot\cdot\cdot, a_p, \frac{\alpha+1}{\gamma}, \frac{\alpha+\beta+1}{\gamma},\frac{\alpha+2\beta+1}{\gamma}, \cdot\cdot\cdot, \frac{\alpha+2j\beta+1}{\gamma}\\; b_1, \cdot\cdot\cdot, b_q,\frac{\alpha+\gamma+1}{\gamma},\frac{\alpha+\gamma+\beta+1}{\gamma}, \frac{\alpha+\gamma+2\beta+1}{\gamma}, \cdot\cdot\cdot, \frac{\alpha+\gamma+2j\beta+1}{\gamma}; \lambda x^{\gamma}\Bigr)\\-i\,x^{\alpha+1}\cos\left(\eta x^\beta\right)\sum\limits_{j=0}^{\infty}\frac{(-1)^j\left(\beta \eta x^\beta\right)^{2j+1}}{\prod\limits_{m=0}^{2j+1}(\alpha+m\beta+1)}\times \\ _{p+j}F_{q+j} \Bigl(a_1, \cdot\cdot\cdot, a_p, \frac{\alpha+1}{\gamma}, \frac{\alpha+\beta+1}{\gamma}, \frac{\alpha+2\beta+1}{\gamma}, \cdot\cdot\cdot, \frac{\alpha+(2j+1)\beta+1}{\gamma};  b_1, \cdot\cdot\cdot, b_q,\\ \frac{\alpha+\gamma+1}{\gamma}, \frac{\alpha+\gamma+\beta+1}{\gamma}, \frac{\alpha+\gamma+2\beta+1}{\gamma},\cdot\cdot\cdot, \frac{\alpha+\gamma+(2j+1)\beta+1}{\gamma}; \lambda x^{\gamma}\Bigr)+C.
\label{eq2.31}
\end{multline}
Hence, comparing (\ref{eq2.31}) with (\ref{eq2.3}) (with $\eta$ replaced by $i\eta$ ) gives (\ref{eq2.30}).
\end{proof}

\section{Some applications from  applied analysis and applied science}
\label{sec:3}

Some  Fourier and Laplace integrals which have been not evaluated before are considered in this section. The integrals considered here satisfy the integrability condition (\ref{eq1.7}), and are evaluated using Lemma \ref{lem1}, Proposition \ref{prp1}, \ref{prp2} and \ref{prp3}, and the asymptotic expansions of the generalized hypergeometric function $_pF_q$.

\subsection{Evaluation of the Fourier integral $\int_{-\infty}^{+\infty} x^\alpha e^{-\theta^2 x^2} e^{ikx} dx, \, \alpha>-2$ }

The Fourier integral with $\alpha=0$,
\begin{equation}
\int\limits_{-\infty}^{+\infty}e^{-\theta^2 x^2} e^{ikx} dx=\int\limits_{-\infty}^{+\infty}e^{-\theta^2 x^2} \cos(kx) dx,
\label{eq3.1}
\end{equation}
is first evaluated using Proposition \ref{prp1}, the fundamental theorem of calculus (FTC) and the asymptotic expression of the generalized hypergeometric function $_p F_q$  for large argument. It is known, and hence there is no need to use other methods to verify the obtained results.

 The function $\cos(kx)$ is first written in terms of the hypergeometric function $ _0 F_1$. To do so, the cosine function is expressed as a Taylor series using the gamma duplication formula \cite{AB}
\begin{equation}
\Gamma(2\alpha)=(2\pi)^{-1/2} 2^{2\alpha-1/2}\Gamma(\alpha)\Gamma\left(\alpha+\frac{1}{2}\right).
\label{eq3.2}
\end{equation}
Then,
\begin{align}
\cos(kx)&=\sum\limits_{m=0}^{\infty}\frac{(-1)^m (kx)^{2m}}{(2m)!}=\sum\limits_{j=0}^{\infty}\frac{(-1)^m (kx)^{2m}}{\Gamma(2m+1)} \nonumber \\ & =\sum\limits_{m=0}^{\infty}\frac{(-1)^m (kx)^{2m}}{\Gamma\left(2\left(m+\frac{1}{2}\right)\right)}=\sum\limits_{m=0}^{\infty}\frac{(-1)^m (kx)^{2m}}{2^{2m}\left(\frac{1}{2}\right)_m  m!} \nonumber \\ & =\sum\limits_{m=0}^{\infty}\frac{\left (-\frac{k^2x^2}{4}\right)^{m}}{\left(\frac{1}{2}\right)_m  m!}=\,_0F_1\left(;\frac{1}{2};-\frac{k^2x^2}{4}\right).
\label{eq3.3}
\end{align}
Proposition \ref{prp1} gives 
\begin{multline}
\int e^{-\theta^2 x^2} e^{ikx} dx=\int e^{-\theta^2 x^2} \cos(kx) dx=\int e^{-\theta^2 x^2} \,_0F_1\left(;\frac{1}{2};-\frac{k^2x^2}{4}\right) dx\\= x  e^{-\theta^2 x^2}\sum\limits_{j=0}^{\infty}\frac{\left(2 \theta^2 x^2\right)^j}{\prod\limits_{m=0}^{j}(2m+1)}\,_{0+j}F_{1+j} \Bigl(,\frac{1}{2} ,\frac{3}{2}, \frac{5}{2},\cdot\cdot\cdot,\frac{ 2j+1}{2}\\;\frac{1}{2} ,\frac{3}{2}, \frac{5}{2}, \frac{7}{2},\cdot\cdot\cdot,\frac{ 2j+3}{2};-\frac{k^2x^2}{4}\Bigr)+C.
\label{eq3.4}
\end{multline}

The variable $x$ and the Fourier \underline{parameter} $k$ can  be separated using formula 16.10.1 in \cite{NI}. This yields 
\begin{multline}
_{0+j}F_{1+j} \Bigl(,\frac{1}{2} ,\frac{3}{2}, \frac{5}{2},\cdot\cdot\cdot,\frac{ 2j+1}{2};\frac{1}{2} ,\frac{3}{2}, \frac{5}{2}, \frac{7}{2},\cdot\cdot\cdot,\frac{ 2j+3}{2};-\frac{k^2x^2}{4}\Bigr)\\=\sum\limits_{n=0}^{\infty}\frac{(\theta_1)_n (\theta_2)_n (x^2)^n}{\left(\frac{1}{2}\right)_n(\theta_1+n)_n  n!}\, _{2}F_{2} \Bigl(\theta_1+n,\theta_2+n;\theta_3+2n+1 ,\frac{1}{2}+n;-x^2\Bigr)\times\\
_{2+j}F_{2+j} \Bigl(n,\theta_3+n,\frac{1}{2} ,\frac{3}{2}, \frac{5}{2},\cdot\cdot\cdot,\frac{ 2j+1}{2};\theta_1, \theta_2,\frac{1}{2} ,\frac{3}{2}, \frac{5}{2}, \frac{7}{2},\cdot\cdot\cdot,\frac{ 2j+3}{2};-\frac{k^2}{4}\Bigr),
\label{eq3.5}
\end{multline}
where $\theta_1, \theta_2$ and $\theta_3$ are free real or complex paramters.

Setting $\theta_1=n$ and $\theta_2=1/2$, the generalized hypergeometric function $ _{2+j}F_{2+j}$  in (\ref{eq3.5}) can be reduced to the confluent hypergeometric function $ _1F_1$.
\begin{itemize}
\item For $ j=0$, we have $$ _{2}F_{2} \Bigl(n,\theta_3+n;n ,\frac{1}{2};-\frac{k^2}{4}\Bigr)=\,_{1}F_{1} \Bigl(\theta_3+n;\frac{1}{2};-\frac{k^2}{4}\Bigr).$$
\item For $j=1$, we have $$ _{3}F_{3} \Bigl(n,\theta_3+n, \frac{1}{2};n ,\frac{1}{2}, \frac{3}{2};-\frac{k^2}{4}\Bigr)=\,_{1}F_{1} \Bigl(\theta_3+n;\frac{3}{2};-\frac{k^2}{4}\Bigr).$$
\item For $j=2$, we have $$ _{4}F_{4} \Bigl(n,\theta_3+n, \frac{1}{2}, \frac{3}{2};n ,\frac{1}{2}, \frac{3}{2}, \frac{5}{2};-\frac{k^2}{4}\Bigr)=\,_{1}F_{1} \Bigl(\theta_3+n;\frac{5}{2};-\frac{k^2}{4}\Bigr),$$
\end{itemize}
and so on. This gives 
\begin{multline}
_{2+j}F_{2+j} \Bigl(n,\theta_3+n,\frac{1}{2} ,\frac{3}{2}, \frac{5}{2},\cdot\cdot\cdot,\frac{ 2j+1}{2};\theta_1, \theta_2,\frac{1}{2} ,\frac{3}{2}, \frac{5}{2}, \frac{7}{2},\cdot\cdot\cdot,\frac{ 2j+3}{2};-\frac{k^2}{4}\Bigr)\\ =\,_{1}F_{1} \Bigl(\theta_3+n;j+\frac{1}{2};-\frac{k^2}{4}\Bigr).
\label{eq3.6}
\end{multline}
Moreover,
\begin{equation}
_{2}F_{2} \Bigl(\theta_1+n,\theta_2+n;\theta_3+2n+1 ,\frac{1}{2}+n;-x^2\Bigr)=\,_{1}F_{1} \Bigl(2n;\theta_3+2n+1 ;-x^2\Bigr).
\label{eq3.7}
\end{equation}

Using Pochhammer' s notation and/or the gamma duplication formula, it is straightforward to obtain that  $(\theta_1)_n=(n)_n=(1/2\sqrt{\pi})\Gamma\left(n+{1}/{2}\right)$ and $ (\theta_3+n)_n=\Gamma(\theta_3+2n)/\Gamma(\theta_3+n)$. Then after rearranging terms, we have 
\begin{multline}
_{0+j}F_{1+j} \Bigl(,\frac{1}{2} ,\frac{3}{2}, \frac{5}{2},\cdot\cdot\cdot,\frac{ 2j+1}{2};\frac{1}{2} ,\frac{3}{2}, \frac{5}{2}, \frac{7}{2},\cdot\cdot\cdot,\frac{ 2j+3}{2};-\frac{k^2x^2}{4}\Bigr)\\=\frac{1}{2\sqrt{\pi}}\sum\limits_{n=0}^{\infty}\frac{\Gamma\left(n+\frac{1}{2}\right)\Gamma(\theta_3+n)}{\Gamma(\theta_3+2n) }\frac{ (4x^2)^n}{ n!}\, _{1}F_{1} \Bigl(2n;\theta_3+2n+1;-x^2\Bigr)\times\\
_{1}F_{1} \Bigl(\theta_3+n;j+\frac{ 1}{2};-\frac{k^2}{4}\Bigr).
\label{eq3.8}
\end{multline}

Let us now consider that $x$ is large ($|x|\gg1$). In that case, one can use formula 13.1.5 in \cite{AB} and obtain 
\begin{equation}
_{1}F_{1} \Bigl(2n;\theta_3+2n+1;-x^2\Bigr)\sim \frac{\Gamma(\theta_3+2n+1)}{\Gamma(\theta_3+1) } (x^2)^{-2n}, |x|\gg1.
\label{eq3.9}
\end{equation}
Substituting (\ref{eq2.9}) in (\ref{eq2.8}) yields
\begin{multline}
_{0+j}F_{1+j} \Bigl(,\frac{1}{2} ,\frac{3}{2}, \frac{5}{2},\cdot\cdot\cdot,\frac{ 2j+1}{2};\frac{1}{2} ,\frac{3}{2}, \frac{5}{2}, \frac{7}{2},\cdot\cdot\cdot,\frac{ 2j+3}{2};-\frac{k^2x^2}{4}\Bigr)=\\ \frac{1}{2\sqrt{\pi}}\sum\limits_{n=0}^{\infty}\frac{\Gamma\left(n+\frac{1}{2}\right)\Gamma(\theta_3+n)\Gamma(\theta_3+2n+1)}{\Gamma(\theta_3+2n) \Gamma(\theta_3+1)}\frac{ \left(\frac{4}{x^2}\right)^n}{ n!}\, _{1}F_{1} \Bigl(\theta_3+n;j+\frac{ 1}{2};-\frac{k^2}{4}\Bigr), |x|\gg1.
\label{eq3.10}
\end{multline}
We now note that if $x$ becomes large ($x\to\pm\infty$), then all the terms in (\ref{eq3.10}) will vanish except the first term corresponding to $n=0$. This yields
\begin{multline}
_{0+j}F_{1+j} \Bigl(,\frac{1}{2} ,\frac{3}{2}, \frac{5}{2},\cdot\cdot\cdot,\frac{ 2j+1}{2};\frac{1}{2} ,\frac{3}{2}, \frac{5}{2}, \frac{7}{2},\cdot\cdot\cdot,\frac{ 2j+3}{2};-\frac{k^2x^2}{4}\Bigr)=\\ \frac{1}{2}\, _{1}F_{1} \Bigl(\theta_3;j+\frac{ 1}{2};-\frac{k^2}{4}\Bigr)=\frac{1}{2}\sum\limits_{l=0}^{\infty}\frac{(\theta_3)_l}{\left(j+\frac{ 1}{2}\right)_l} \frac{\left(-\frac{k^2}{4}\right)^l}{ l!}, |x|\gg1.
\label{eq3.11}
\end{multline}

Next, substituting (\ref{eq3.11}) in (\ref{eq2.4}), we obtain 
\begin{multline}
\int e^{-\theta^2 x^2} e^{ikx} dx=\int e^{-\theta^2 x^2} \,_0F_1\left(;\frac{1}{2};-\frac{k^2x^2}{4}\right) dx
\\ = \frac{1}{2} x  e^{-\theta^2 x^2}\sum\limits_{j=0}^{\infty}\frac{\left(2 \theta^2 x^2\right)^j}{\prod\limits_{m=0}^{j}(2m+1)}\sum\limits_{l=0}^{\infty}\frac{(\theta_3)_l}{\left(j+\frac{ 1}{2}\right)_l} \frac{\left(-\frac{k^2}{4}\right)^l}{ l!} +C, |x|\gg1.
\label{eq3.12}
\end{multline}

Observe now that 
$$\left(j+\frac{ 1}{2}\right)_l=\frac{\Gamma\left(j+\frac{ 1}{2}+l \right)}{\Gamma\left(j+\frac{ 1}{2}\right)}=\frac{\left(l+\frac{ 1}{2} \right)_j\Gamma\left(l+\frac{ 1}{2} \right)}{\left(\frac{ 1}{2}\right)_j\Gamma\left(\frac{ 1}{2}\right)}\,\,\mbox{and}\,\,\prod\limits_{m=0}^{j}(2m+1) =2^j \frac{ 1}{2}\left(\frac{3}{2}\right)_j.$$
Substituting into (\ref{eq3.12}) and rearranging terms yields

\begin{align}
&\int e^{-\theta^2 x^2} e^{ikx} dx=\int e^{-\theta^2 x^2} \,_0F_1\left(;\frac{1}{2};-\frac{k^2x^2}{4}\right) dx
\nonumber \\ &= \frac{1}{2} x  e^{-\theta^2 x^2}\sum\limits_{j=0}^{\infty}\frac{\left(2 \theta^2 x^2\right)^j}{\prod\limits_{m=0}^{j}(2m+1)}\sum\limits_{l=0}^{\infty}\frac{(\theta_3)_l}{\left(j+\frac{ 1}{2}\right)_l} \frac{\left(-\frac{k^2}{4}\right)^l}{ l!} +C
\nonumber \\ &= \Gamma\left(\frac{1}{2}\right) x  e^{-\theta^2 x^2}\sum\limits_{l=0}^{\infty}\frac{(\theta_3)_l}{\Gamma\left(l+\frac{ 1}{2} \right)} \frac{\left(-\frac{k^2}{4}\right)^l}{ l!} \sum\limits_{j=0}^{\infty}\frac{\left(\frac{1}{2}\right)_j (1)_j}{\left(\frac{3}{2}\right)_j \left(l+\frac{ 1}{2} \right)_j}\frac{\left(\theta^2 x^2\right)^j}{j!}+C
\nonumber \\ &= \Gamma\left(\frac{1}{2}\right) x  e^{-\theta^2 x^2}\sum\limits_{l=0}^{\infty}\frac{(\theta_3)_l}{\Gamma\left(l+\frac{ 1}{2} \right)} \frac{\left(-\frac{k^2}{4}\right)^l}{ l!} \, _{2}F_{2} \Bigl(\frac{1}{2},1;\frac{3}{2},l+ \frac{1}{2};\theta^2 x^2\Bigr)+C, |x|\gg1.
\label{eq3.13}
\end{align}

It can readily be shown  using formula 5.25 in \cite{N2} (one can also use formulas 16.11.1, 16.11.2 and 16.11.7 in \cite{NI}) that 
\begin{equation}
_{2}F_{2} \Bigl(\frac{1}{2},1;\frac{3}{2},l+ \frac{1}{2};\theta^2 x^2\Bigr)\sim\frac{\Gamma\left(\frac{ 3}{2} \right)\Gamma\left(l+\frac{ 1}{2} \right)}{\Gamma\left(\frac{ 1}{2} \right)} (\theta^2 x^2)^{-l-1/2} e^{\theta^2 x^2},  |x|\gg1
\label{eq3.14}
\end{equation}

Substituting (\ref{eq3.14}) in (\ref{eq3.13}) and rearranging terms gives
\begin{align}
\int e^{-\theta^2 x^2} e^{ikx} dx&=\int e^{-\theta^2 x^2} \cos(kx) dx
\nonumber \\ &=\Gamma\left(\frac{3}{2}\right) \frac{ x}{|\theta x|}\sum\limits_{l=0}^{\infty}\frac{(\theta_3)_l}{(x^2)^l} \frac{\left(-\frac{k^2}{4\theta^2}\right)^l}{ l!}, |x|\gg1.
\label{eq3.15}
\end{align}
 
Applying the fundamental theorem of calculus ( FTC) gives
\begin{align}
\int\limits_{-L}^{+L} e^{-\theta^2 x^2} e^{ikx} dx&=\int\limits_{-L}^{+L} e^{-\theta^2 x^2} \cos(kx) dx
\nonumber \\ &=\frac{2\Gamma\left(\frac{3}{2}\right)}{|\theta|}\sum\limits_{l=0}^{\infty}\frac{(\theta_3)_l}{(L^2)^l} \frac{\left(-\frac{k^2}{4\theta^2}\right)^l}{ l!}, L\gg1.
\nonumber \\ &=\frac{2\Gamma\left(\frac{3}{2}\right)}{|\theta|}\sum\limits_{l=0}^{\infty}\frac{(L^2)^l}{(L^2)^l} \frac{\left(-\frac{k^2}{4\theta^2}\right)^l}{ l!}=\frac{2\Gamma\left(\frac{3}{2}\right)}{|\theta|}\sum\limits_{l=0}^{\infty} \frac{\left(-\frac{k^2}{4\theta^2}\right)^l}{ l!}
\label{eq3.16}
\end{align}
 since $\theta_3$ is a real or a complex free parameter, and if $\mbox{Re}(\theta_3) \gg 1$, then $(\theta_3)_l=\prod_{m=1}^{l} (\theta_3+m-1) \to  (\theta_3)^l$.
 Hence,
\begin{align}
\int\limits_{-\infty}^{+\infty}e^{-\theta^2 x^2} e^{ikx} dx&=\int\limits_{-\infty}^{+\infty} e^{-\theta^2 x^2} \cos(kx) dx=\int\limits_{-\infty}^{+\infty} e^{-\theta^2 x^2} \,_0F_1\left(;\frac{1}{2};-\frac{k^2x^2}{4}\right) dx
\nonumber \\ &=\frac{2\Gamma\left(\frac{3}{2}\right)}{|\theta|}\sum\limits_{l=0}^{\infty} \frac{\left(-\frac{k^2}{4\theta^2}\right)^l}{ l!}=\frac{\sqrt{\pi}}{|\theta|}e^{-\frac{k^2}{4\theta^2}}
\label{eq3.17}
\end{align}
as expected.

The related Fourier integral $\int_{-\infty}^{+\infty} x^\alpha  e^{-\theta^2 x^2} e^{ikx} dx$ can also  be evaluated. 
\begin{theorem} The Fourier integral 
\begin{multline}
\int\limits_{-\infty}^{+\infty} x^\alpha  e^{-\theta^2 x^2} e^{ikx} dx= \begin{cases} \frac{\Gamma\left(\frac{\alpha +1}{2}\right)}{|\theta|^{\alpha+1}}\, _{1}F_{1} \Bigl(\frac{\alpha +1}{2};\frac{1}{2}; -\frac{k^2}{4\theta^2}\Bigr) , & \text{if}\, \alpha>-1 \,\text{ and even}.  \\
i \frac{\Gamma\left(\frac{\alpha}{2}+1\right)}{\theta^2|\theta|^\alpha}k\, _{1}F_{1} \Bigl(\frac{\alpha}{2}+1;\frac{3}{2}; -\frac{k^2}{4\theta^2}\Bigr) , & \text{if}\, \alpha>-2 \,\text{ and odd}. \\
\end{cases}
\label{eq3.18}
\end{multline}
\label{th7}
\end{theorem}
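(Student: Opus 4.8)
The plan is to reduce the integral to elementary Gaussian moments by expanding the oscillatory factor $e^{ikx}$ in its Maclaurin series and integrating term by term, letting the parity of $\alpha$ decide which half of $e^{ikx}=\cos(kx)+i\sin(kx)$ actually contributes.

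First I would dispose of the even case ($\alpha>-1$ even). Then $x^\alpha e^{-\theta^2 x^2}$ is an even function, so the $\sin(kx)$ part of $e^{ikx}$ integrates to zero and it remains to compute $\int_{-\infty}^{+\infty}x^\alpha e^{-\theta^2 x^2}\cos(kx)\,dx$. Inserting the representation $\cos(kx)=\sum_{m\ge 0}(-1)^m(kx)^{2m}/(2m)!$ (this is $\,_0F_1(;\tfrac12;-k^2x^2/4)$, cf.\ (\ref{eq3.3})) and interchanging summation and integration, the problem collapses to the standard moments $\int_{-\infty}^{+\infty}x^{\alpha+2m}e^{-\theta^2 x^2}\,dx=\Gamma(\tfrac{\alpha+1}{2}+m)/|\theta|^{\alpha+2m+1}$, which converge since $\alpha+2m>-1$ and $x^\alpha\cos(kx)\sim x^\alpha$ near the origin. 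Rewriting $(2m)!=2^{2m}(\tfrac12)_m m!$ via the duplication formula (\ref{eq3.2}) and $\Gamma(\tfrac{\alpha+1}{2}+m)=\Gamma(\tfrac{\alpha+1}{2})(\tfrac{\alpha+1}{2})_m$, the surviving sum is $\sum_{m\ge 0}\frac{(\frac{\alpha+1}{2})_m}{(\frac12)_m\,m!}(-\tfrac{k^2}{4\theta^2})^m$, which by (\ref{eq1.1}) is precisely $\,_1F_1(\tfrac{\alpha+1}{2};\tfrac12;-\tfrac{k^2}{4\theta^2})$; together with the factor $|\theta|^{-(\alpha+1)}$ this is the first line of (\ref{eq3.18}).

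For the odd case ($\alpha>-2$ odd), $x^\alpha e^{-\theta^2 x^2}$ is odd, so the $\cos(kx)$ part vanishes and $\int_{-\infty}^{+\infty}x^\alpha e^{-\theta^2 x^2}e^{ikx}\,dx=i\int_{-\infty}^{+\infty}x^\alpha e^{-\theta^2 x^2}\sin(kx)\,dx$. Expanding $\sin(kx)=\sum_{m\ge 0}(-1)^m(kx)^{2m+1}/(2m+1)!$ and integrating term by term gives $\int_{-\infty}^{+\infty}x^{\alpha+2m+1}e^{-\theta^2 x^2}\,dx=\Gamma(\tfrac{\alpha}{2}+1+m)/|\theta|^{\alpha+2m+2}$, which converges because $\alpha+2m+1>-1$ and $x^\alpha\sin(kx)\sim kx^{\alpha+1}$ near the origin. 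Using $(2m+1)!=2^{2m}(\tfrac32)_m m!$ and $\Gamma(\tfrac{\alpha}{2}+1+m)=\Gamma(\tfrac{\alpha}{2}+1)(\tfrac{\alpha}{2}+1)_m$, the sum assembles into $\,_1F_1(\tfrac{\alpha}{2}+1;\tfrac32;-\tfrac{k^2}{4\theta^2})$, and writing $|\theta|^{\alpha+2}=\theta^2|\theta|^\alpha$ while pulling out the single power of $k$ yields the second line of (\ref{eq3.18}). An alternative is to mimic the $\alpha=0$ derivation (\ref{eq3.4})--(\ref{eq3.17}): apply Proposition \ref{prp1} to the indefinite integral, reduce the nested $\,_{p+j}F_{q+j}$ to $\,_1F_1$ with the free-parameter device, take the $|x|\to\infty$ asymptotics of $\,_pF_q$, and close with the fundamental theorem of calculus.

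The only genuinely delicate point is the term-by-term integration. One must verify absolute convergence of $\sum_m\frac{|k|^{2m}}{(2m)!}\int_{-\infty}^{+\infty}|x|^{\alpha+2m}e^{-\theta^2 x^2}\,dx$ so that Fubini's theorem applies; but $\Gamma(\tfrac{\alpha+1}{2}+m)/(2m)!=\Gamma(\tfrac{\alpha+1}{2})(\tfrac{\alpha+1}{2})_m/(2^{2m}(\tfrac12)_m m!)$ decays factorially in $m$, so the resulting series is an entire function of $k$ and the interchange is legitimate for every real $k$; the odd case is identical with $(\tfrac12)_m$ replaced by $(\tfrac32)_m$. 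If instead one follows the Proposition \ref{prp1} route, the obstacle migrates to controlling the $\,_{p+j}F_{q+j}$ asymptotics uniformly in $j$ and to justifying that the double series collapses in the limit, exactly as in the $\alpha=0$ computation already displayed in the text.
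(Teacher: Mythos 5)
Your proof is correct, but it takes a genuinely different --- and more elementary --- route than the one the paper intends. The paper omits the proof of Theorem \ref{th7} and points to ``the same procedure'' as the $\alpha=0$ computation (\ref{eq3.4})--(\ref{eq3.17}): apply Proposition \ref{prp1} to obtain the indefinite integral as a double hypergeometric series, separate $x$ from $k$ with the free parameters $\theta_1,\theta_2,\theta_3$, invoke large-argument asymptotics of $_1F_1$ and $_2F_2$, and close with the fundamental theorem of calculus; that route requires controlling the $_{p+j}F_{q+j}$ asymptotics uniformly in $j$ and a limiting step such as the replacement of $(\theta_3)_l$ by $(L^2)^l$ in (\ref{eq3.16}), which is delicate to make airtight. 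You instead exploit parity to discard half of $e^{ikx}$, expand the surviving $\cos(kx)$ or $\sin(kx)$ in its Maclaurin series, and integrate termwise against the Gaussian moments $\Gamma\left(\frac{\alpha+1}{2}+m\right)/|\theta|^{\alpha+2m+1}$ (respectively $\Gamma\left(\frac{\alpha}{2}+1+m\right)/|\theta|^{\alpha+2m+2}$); the identities $(2m)!=4^m\left(\frac12\right)_m m!$ and $(2m+1)!=4^m\left(\frac32\right)_m m!$ then assemble the sums directly into the stated $_1F_1$'s, and your Fubini justification is sound because $\Gamma\left(\frac{\alpha+1}{2}+m\right)/(2m)!$ decays like $(e/4m)^m$ by Stirling, so the moment series is entire in $k$. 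Your constants check against the special cases (\ref{eq3.17}) and (\ref{eq3.19}). What your approach buys is a short, self-contained, fully rigorous argument that never leaves the realm of absolutely convergent series; what it gives up is the illustration of the Section \ref{sec:2} machinery (Proposition \ref{prp1} and the parameter-separation device) that the paper is advertising.
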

Using the same procedure as in the evaluation of $\int_{-\infty}^{+\infty}  e^{-\theta^2 x^2} e^{ikx} dx$, it can be shown that (\ref{eq3.18}) holds. So the proof of Theorem \ref{th7} is omitted. It can readily be verified that if $\alpha=0$ and noticing that $\alpha=0$ is even, then Theorem \ref{th7}  gives  (\ref{eq3.17}). Moreover, if  $\alpha=1$ and noticing that $\alpha=1$ is odd, then Theorem \ref{th7}  gives
\begin{multline}\int\limits_{-\infty}^{+\infty} x  e^{-\theta^2 x^2} e^{ikx} dx=\frac{ik}{2\theta^2}\frac{\sqrt{\pi}}{|\theta|}e^{-\frac{k^2}{4\theta^2}}\\ =\frac{ik}{2\theta^2}\int\limits_{-\infty}^{+\infty}e^{-\theta^2 x^2} e^{ikx} dx=-\frac{1}{2\theta^2}\int\limits_{-\infty}^{+\infty}\left(\frac{d}{dx}e^{-\theta^2 x^2}\right) e^{ikx} dx.
\label{eq3.19}
\end{multline}

\subsection{Evaluation of the Laplace integral $\int_{0}^{+\infty}x^\alpha e^{-\theta^2 x^2} e^{-ux} dx$}

\begin{theorem} The Laplace integral 
\begin{equation}
\int\limits_{0}^{+\infty}x^\alpha e^{-\theta^2 x^2} e^{-ux} dx=\frac{\Gamma(\alpha+1)}{u^{\alpha+1}}\,_{2}F_{0}\left(\frac{\alpha+1}{2},\frac{\alpha}{2}+1;\,;-\frac{4\theta^2}{u^2}\right),\,\alpha>-1.
\label{eq3.20}
\end{equation}
\label{th8}
\end{theorem}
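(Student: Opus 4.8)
The plan is to obtain (\ref{eq3.20}) by the most direct route: expand the Gaussian factor, integrate term by term against $e^{-ux}$ using the elementary Laplace transform of a monomial, and then reassemble the resulting series of gamma functions into a $_2F_0$ by means of the duplication formula already used in (\ref{eq3.2})--(\ref{eq3.3}).

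First I would record the elementary identity $\int_0^{\infty} x^{s} e^{-ux}\,dx = \Gamma(s+1)/u^{s+1}$, valid for $\mbox{Re}(s)>-1$ and $\mbox{Re}(u)>0$; the hypothesis $\alpha>-1$ (together with $\mbox{Re}(u)>0$ from the Laplace convention, cf.\ (\ref{eq1.8})) both makes the left side of (\ref{eq3.20}) converge at $x=0$ and makes $\mbox{Re}(\alpha+2n+1)>-1$ hold for every integer $n\ge 0$. Writing $e^{-\theta^2 x^2}=\sum_{n=0}^{\infty}(-\theta^2)^n x^{2n}/n!$ and integrating term by term gives
\[
\int_0^{\infty} x^{\alpha} e^{-\theta^2 x^2} e^{-ux}\,dx=\sum_{n=0}^{\infty}\frac{(-\theta^2)^n}{n!}\,\frac{\Gamma(\alpha+2n+1)}{u^{\alpha+2n+1}}.
\]
Next I would convert the gamma factor to Pochhammer form, $\Gamma(\alpha+2n+1)=\Gamma(\alpha+1)(\alpha+1)_{2n}$, and apply the duplication identity $(\alpha+1)_{2n}=4^{n}\left(\frac{\alpha+1}{2}\right)_n\left(\frac{\alpha}{2}+1\right)_n$, which follows from the gamma duplication formula (\ref{eq3.2}) in exactly the way it is used in (\ref{eq3.3}). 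Substituting and factoring out the $n$-independent quantity $\Gamma(\alpha+1)/u^{\alpha+1}$ leaves
\[
\frac{\Gamma(\alpha+1)}{u^{\alpha+1}}\sum_{n=0}^{\infty}\frac{\left(\frac{\alpha+1}{2}\right)_n\left(\frac{\alpha}{2}+1\right)_n}{n!}\left(-\frac{4\theta^2}{u^2}\right)^{n},
\]
and by Definition \ref{def1} the series is precisely $_2F_0\left(\frac{\alpha+1}{2},\frac{\alpha}{2}+1;\,;-\frac{4\theta^2}{u^2}\right)$, giving (\ref{eq3.20}).

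The step requiring care --- and the one I would flag explicitly --- is that this $_2F_0$ has zero radius of convergence, so the term-by-term integration above is not justified by monotone or dominated convergence (indeed $\int_0^{\infty} e^{+\theta^2 x^2}x^{\alpha}e^{-ux}\,dx=+\infty$). Accordingly (\ref{eq3.20}) is to be read as an asymptotic expansion, valid as $u\to\infty$ with $\theta$ fixed; the rigorous underpinning is Watson's lemma applied after the rescaling $x\mapsto x/u$, which shows that the partial sum of the first $N$ terms approximates the integral with error $O(u^{-\alpha-1-2N})$. Equivalently, and in keeping with the method used for the Fourier integral earlier in Section \ref{sec:3}, one may derive the same expansion from Proposition \ref{prp1} with $\beta=1$, $\eta=-u$, $\gamma=2$, $\lambda=-\theta^2$ and ${}_pF_q={}_0F_0(;\,;-\theta^2 x^2)$, by evaluating the antiderivative there between $0$ and $+\infty$ and inserting the large-argument asymptotics of the resulting $_{j}F_{j}$'s, exactly as in the passage from (\ref{eq3.4}) to (\ref{eq3.17}). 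As a consistency check, $\alpha=0$ collapses (\ref{eq3.20}) to $\frac{1}{u}\,{}_2F_0\left(\frac{1}{2},1;\,;-\frac{4\theta^2}{u^2}\right)$, which is the classical asymptotic series (as $u\to\infty$) of $\frac{\sqrt{\pi}}{2\theta}e^{u^2/(4\theta^2)}\,\mbox{erfc}\!\left(\frac{u}{2\theta}\right)=\int_0^{\infty}e^{-\theta^2 x^2-ux}\,dx$.
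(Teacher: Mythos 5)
Your proof is correct and, amusingly, it is essentially the \emph{verification} that the paper appends at the end of its own proof (the passage culminating in (\ref{eq3.26})), promoted to the main argument. The paper's primary derivation is much more roundabout: it applies Proposition \ref{prp1} with $_0F_0(;\,;-\theta^2x^2)$, uses Lemma \ref{lem1} to rewrite the double series, recognizes the inner $j$-sum as a confluent hypergeometric function $_1F_1(1;2n+\alpha+2;ux)$, evaluates the boundary term at $x\to+\infty$ using the large-argument asymptotics 13.1.5 of \cite{AB}, and only then invokes the duplication formula to reach the $_2F_0$. Your direct route (expand the Gaussian, apply $\int_0^\infty x^s e^{-ux}dx=\Gamma(s+1)/u^{s+1}$ termwise, convert $(\alpha+1)_{2n}$ via duplication) reaches the same series in three lines and avoids the asymptotic manipulation of $_1F_1$ entirely; what the paper's longer route buys is a demonstration that the machinery of Section \ref{sec:2} can reproduce a classical Laplace transform, which is the stated purpose of the section.

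One substantive point in your favor: you are right that the $_2F_0$ on the right of (\ref{eq3.20}) has zero radius of convergence (the terms grow like $4^n n!\,(\theta^2/u^2)^n$), so the termwise integration is not justified by dominated convergence and the identity can only be read as an asymptotic expansion in $u\to\infty$, rigorously via Watson's lemma. The paper never acknowledges this --- both its main derivation and its ``simple verification'' treat the divergent series as if it were a convergent one --- so your explicit flagging of the issue, together with the $\alpha=0$ consistency check against the known asymptotic series of $\tfrac{\sqrt{\pi}}{2\theta}e^{u^2/(4\theta^2)}\operatorname{erfc}\bigl(\tfrac{u}{2\theta}\bigr)$, makes your version more careful than the one in the text.
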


\begin{proof}

Proposition \ref{prp1} gives

\begin{multline}
\int x^\alpha e^{-\theta^2 x^2} e^{-ux} dx=\int x^\alpha \,_0F_0 \left(;\,;-\theta^2 x^2\right) e^{-ux} dx\\
x^{\alpha+1} e^{-ux}\sum\limits_{j=0}^{\infty}\frac{(u x)^j}{\prod\limits_{m=0}^{j}(\alpha+m+1)}\,_{0+j}F_{0+j} \Bigl(, \frac{\alpha+1}{2}, \frac{\alpha+2}{2}, \frac{\alpha+3}{2},\cdot\cdot\cdot, \frac{\alpha+j+2}{\gamma}\\; ,\frac{\alpha+3}{2}, \frac{\alpha+4}{2}, \frac{\alpha+5}{2},\cdot\cdot\cdot, \frac{\alpha+j+3}{\gamma}; \lambda x^{\gamma}\Bigr)+C.
\label{eq3.21}
\end{multline}

Then using Lemma \ref{lem1} yields

\begin{multline}
\int x^\alpha e^{-\theta^2 x^2} e^{-ux} dx=\int x^\alpha \,_0F_0 \left(;\,;-\theta^2 x^2\right) e^{-ux} dx\\
=x^{\alpha+1} e^{-ux}\sum\limits_{n=0}^{\infty}\frac{(-\theta^2 x^2)^n}{n!}\sum\limits_{j=0}^{\infty}\frac{(u x)^j}{\prod\limits_{m=1}^{j}(2n+\alpha+m+1)}+C
\\=x^{\alpha+1} e^{-ux}\sum\limits_{n=0}^{\infty}\frac{(-\theta^2 x^2)^n}{(2n+\alpha+1)n!}\sum\limits_{j=0}^{\infty}\frac{(u x)^j}{\prod\limits_{m=1}^{j}(2n+\alpha+m+1)}+C
\\=x^{\alpha+1} e^{-ux}\sum\limits_{n=0}^{\infty}\frac{(-\theta^2 x^2)^n}{(2n+\alpha+1)n!}\sum\limits_{j=0}^{\infty}\frac{(1)_j(u x)^j}{(2n+\alpha+2)_j j!}+C
\\=x^{\alpha+1} e^{-ux}\sum\limits_{n=0}^{\infty}\frac{(-\theta^2 x^2)^n}{(2n+\alpha+1)n!}\,_1F_1 \left(1;2n+\alpha+1;u x\right) +C.
\label{eq3.22}
\end{multline}
Applying the FTC yields
\begin{multline}
\int\limits_{0}^{+\infty} x^\alpha e^{-\theta^2 x^2} e^{-ux} dx=\int\limits_{0}^{+\infty} x^\alpha \,_0F_0 \left(;\,;-\theta^2 x^2\right) e^{-ux} dx\\
=\lim\limits_{x\to+\infty}x^{\alpha+1} e^{-ux}\sum\limits_{n=0}^{\infty}\frac{(-\theta^2 x^2)^n}{(2n+\alpha+1)n!}\,_1F_1 \left(1;2n+\alpha+2;u x\right).
\label{eq3.23}
\end{multline}
Formula 13.1.5 in \cite{AB} gives the asymptotic expression 
\begin{equation}
_{1}F_{1}\left(1;2n+\alpha+2;u x\right)\sim \frac{\Gamma(2n+\alpha+2)}{ (ux)^{2n+\alpha+1}} e^{ux}, |x|\gg1.
\label{eq3.24}
\end{equation}
Substituting (\ref{eq3.24}) in (\ref{eq3.23}), using Pochhammer notation and the gamma duplication formula (\ref{eq3.2}) yields
\begin{multline}
\int\limits_{0}^{+\infty} x^\alpha e^{-\theta^2 x^2} e^{-ux} dx=\int\limits_{0}^{+\infty} x^\alpha \,_0F_0 \left(;\,;-\theta^2 x^2\right) e^{-ux} dx\\
=\frac{1}{u^{\alpha+1}}\sum\limits_{n=0}^{\infty}\frac{\Gamma(2n+\alpha+2)\left(-\frac{\theta^2}{u^2}\right)^n}{(2n+\alpha+1)n!}=\frac{1}{u^{\alpha+1}}\sum\limits_{n=0}^{\infty}\Gamma(2n+\alpha+1)\frac{\left(-\frac{\theta^2}{u^2}\right)^n}{n!}\\
=\frac{\Gamma(\alpha+1)}{u^{\alpha+1}}\sum\limits_{n=0}^{\infty}\left(\frac{\alpha+1}{2}\right)_n\left(\frac{\alpha}{2}+1\right)_n\frac{\left(-\frac{4\theta^2}{u^2}\right)^n}{n!},\,\alpha>-1\\=
\frac{\Gamma(\alpha+1)}{u^{\alpha+1}}\,_{2}F_{0}\left(\frac{\alpha+1}{2},\frac{\alpha}{2}+1;\,;-\frac{4\theta^2}{u^2}\right),\,\alpha>-1.
\label{eq3.25}
\end{multline}
A simple way to verify that  (\ref{eq3.25}) is correct is to  expand the Gaussian in terms of its Taylor series and to use the  linear property of Laplace integrals. Then,

\begin{multline}
\int\limits_{0}^{+\infty} x^\alpha e^{-\theta^2 x^2} e^{-ux} dx=\int\limits_{0}^{+\infty} x^\alpha\sum\limits_{n=0}^{\infty} \frac{\left(-\theta^2 x^2\right)^n}{n!} e^{-ux} dx\\
=\sum\limits_{n=0}^{\infty} \frac{\left(-\theta^2\right)^{n}}{n!}\int\limits_{0}^{+\infty}  x^{2n+\alpha} e^{-ux} dx=\sum\limits_{n=0}^{\infty} \frac{\left(-\theta^2\right)^{n}}{n!}\frac{\Gamma(2n+\alpha+1)}{ u^{2n+\alpha+1}}
\\=\frac{1}{u^{\alpha+1}}\sum\limits_{n=0}^{\infty}\Gamma(2n+\alpha+1)\frac{\left(-\frac{\theta^2}{u^2}\right)^n}{n!},
\label{eq3.26}
\end{multline}
which is exactly (\ref{eq3.25}).
\end{proof}

\begin{corollary} The Laplace integral (or transform) of the Gaussian is 
\begin{equation}
\int\limits_{0}^{+\infty}e^{-\theta^2 x^2} e^{-ux} dx=\frac{1}{u}\,_{2}F_{0}\left(\frac{1}{2},1;\,;-\frac{4\theta^2}{u^2}\right).
\label{eq3.27}
\end{equation}
Therefore, that of the error function, $\text{erf} (x)=\int_{0}^{x} e^{-v^2} dv$,  is given by 
\begin{equation}
\int\limits_{0}^{+\infty}\text{erf} (x) e^{-ux} dx=\frac{1}{u}\int\limits_{0}^{+\infty}e^{- x^2} e^{-ux} dx=\frac{1}{u^2}\,_{2}F_{0}\left(\frac{1}{2},1;\,;-\frac{4}{u^2}\right).
\label{eq3.28}
\end{equation}
\label{cor1}
\end{corollary}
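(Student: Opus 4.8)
The plan is to obtain both identities as immediate specializations of Theorem \ref{th8} together with one elementary property of the Laplace transform. For (\ref{eq3.27}), I would set $\alpha=0$ in (\ref{eq3.20}): since $\Gamma(1)=1$, $(\alpha+1)/2=1/2$ and $\alpha/2+1=1$, the right-hand side of (\ref{eq3.20}) collapses to $\frac{1}{u}\,{}_{2}F_{0}\!\left(\frac12,1;\,;-\frac{4\theta^2}{u^2}\right)$, which is exactly (\ref{eq3.27}). No further computation is needed at this step.

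For (\ref{eq3.28}), I would first record that, with the normalization $\text{erf}(x)=\int_0^x e^{-v^2}\,dv$ adopted in this paper, we have $\text{erf}(0)=0$ and $\frac{d}{dx}\text{erf}(x)=e^{-x^2}$. Integrating by parts in the Laplace integral — the boundary contribution at $x=0$ vanishes because $\text{erf}(0)=0$, and the one at $x=+\infty$ vanishes because $\text{erf}$ is bounded while $e^{-ux}\to 0$ for $\mbox{Re}(u)>0$ — gives
$$\int\limits_{0}^{+\infty}\text{erf}(x)\,e^{-ux}\,dx=\frac{1}{u}\int\limits_{0}^{+\infty}e^{-x^2}e^{-ux}\,dx.$$
I would then invoke (\ref{eq3.27}) with $\theta=1$ to evaluate the integral on the right as $\frac{1}{u}\,{}_{2}F_{0}\!\left(\frac12,1;\,;-\frac{4}{u^2}\right)$, and multiply by the prefactor $1/u$ to get $\frac{1}{u^2}\,{}_{2}F_{0}\!\left(\frac12,1;\,;-\frac{4}{u^2}\right)$, which is (\ref{eq3.28}).

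There is essentially no hard step here: the corollary amounts to substituting particular parameter values into Theorem \ref{th8} and using linearity together with the standard derivative rule for Laplace transforms. The only point deserving a sentence of care is the vanishing of the boundary terms in the integration by parts, which is immediate from $\mbox{Re}(u)>0$ and the boundedness of $\text{erf}$; alternatively one can bypass integration by parts entirely by writing $\text{erf}(x)=\int_0^x e^{-v^2}\,dv$, interchanging the order of integration in $\int_{0}^{+\infty}e^{-ux}\int_{0}^{x}e^{-v^2}\,dv\,dx$ (legitimate by Tonelli's theorem, the integrand being nonnegative), and carrying out the $x$-integration first, which again produces the factor $1/u$. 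Either route reduces (\ref{eq3.28}) to (\ref{eq3.27}).
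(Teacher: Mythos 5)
Your proof is correct and takes essentially the same route as the paper: specialize Theorem \ref{th8} to $\alpha=0$ to get (\ref{eq3.27}), then reduce (\ref{eq3.28}) to the $\theta=1$ case via the standard Laplace-transform rule for an antiderivative. The only difference is that you spell out the integration-by-parts (or Tonelli) justification that the paper leaves implicit, and you correctly invoke Theorem \ref{th8} where the paper's accompanying sentence mistakenly cites Theorem \ref{th7}.
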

The results  in Corollary \ref{cor1} are obtained by setting $\alpha=0$ in Theorem \ref{th7}, while $\theta$ is set to one ($\theta=1$) in (\ref{eq3.27}) in order to obtain (\ref{eq3.28}).

\subsection{Solution of the Orr-Sommerfeld equation with the plane Couette mean flow background in the boundary layer}
In the short-wave limit approximation, the Orr-Sommerfeld equation with the plane Couette flow background  $\bar{U}(y)=y, 0\le y<\infty$, \cite{SH}
\begin{multline}
\phi_{yyyy}-\left[2r^2k^2+i \,\text{Re}\, k\left(\bar{U}(y)-\frac{\omega}{k}\right)\right]\phi_{yy}\\+\left[r^4k^4+i \,r^2\text{Re}\, k^3\left(\bar{U}(y)-\frac{\omega}{k}\right)+i \,\text{Re}\, k\bar{U}_{yy}(y)\right]\phi=0,
\label{eq3.29}
\end{multline}
where $k$ is the wavenumber, $r\gg1$ is the space aspect ratio, $\mbox{Re}$ is the Reynolds number, $\omega$ is the intrinsic wave frequency and the subscript $y$ stands for differentiation with respect to $y$, has the solution \cite{N5} 
\begin{multline}
\phi(y)=\frac{e^{-rk y}}{rk}\int\limits_0^y \cosh(rk\xi)\mbox{Ai}[ (i \,\mbox{Re}\,k)^{1/3} (\xi-\lambda)]d\xi\\+
\frac{\cosh(rky)}{rk}\int\limits_y^\infty e^{-rk\xi}\mbox{Ai}[ (i\, \mbox{Re}\, k)^{1/3} (\xi-\lambda)]d\xi,
\label{eq3.30}
\end{multline}
where $\mbox{Ai}$ is the Airy function and $\lambda=i \,\text{Re}\, \omega-r^2k^2$.
 
This solution was derived using Green's function methods. An outer solution (see for example \cite{BO}) which is valid for $|y|\gg1$, was obtained as well and is given by equation (68) in \cite{N5}. Here, we rather obtain a solution valid for all $y\ge0$ by evaluating the integrals in (\ref{eq3.18}) using Propositions \ref{prp1}, \ref{prp2} and \ref{prp3}, and write $\phi(y)$ in terms of some  series involving the hypergeometric function $ _{2}F_{3}$ with well known mathematical properties (e.g. behavior for $|y|\gg1$). In that case we can use the asymptotic expansion of the hypergeometric function $ _{2}F_{3}$ when $y$ goes to infinity ($y\to+\infty$), which is given by formula 3.18 in \cite{N2}. 

To do so,  the Airy function $\mbox{Ai}$ is first written  in terms of the hypergeometric function $ _{0}F_{1}$ \cite{NI} as 
\begin{multline}
\mbox{Ai}[ (i \,\mbox{Re}\,k)^{1/3} (\xi-\lambda)]=\frac{_{0}F_{1}\left(;\frac{2}{3};\frac{(i\, \mbox{Re}\, k) (\xi-\lambda)^3}{9}\right)}{\Gamma\left(\frac{2}{3}\right)}\\- \frac{3^{\frac{1}{3}}(\xi-\lambda)}{\Gamma\left(\frac{1}{3}\right)}\,_{0}F_{1}\left(;\frac{4}{3};\frac{(i\, \mbox{Re}\, k) (\xi-\lambda)^3}{9}\right).
\label{eq3.31}
\end{multline}
Substituting in (\ref{eq3.30}) yields
\begin{multline}
\phi(y)\sim\frac{e^{-rk y}}{3^{\frac{2}{3}}\, rk}\times
\\ \int\limits_0^y \cosh(rk\xi)\left[\frac{_{0}F_{1}\left(;\frac{2}{3};\frac{(i\, \mbox{Re}\, k) (\xi-\lambda)^3}{9}\right)}{\Gamma\left(\frac{2}{3}\right)}- \frac{3^{\frac{1}{3}}(\xi-\lambda)}{\Gamma\left(\frac{1}{3}\right)}\,_{0}F_{1}\left(;\frac{4}{3};\frac{(i\, \mbox{Re}\, k) (\xi-\lambda)^3}{9}\right)\right]d\xi\\+
\frac{\cosh(rky)}{3^{\frac{2}{3}}\, rk}\times
\\ \int\limits_y^\infty e^{-rk\xi}\left[\frac{_{0}F_{1}\left(;\frac{2}{3};\frac{(i\, \mbox{Re}\, k) (\xi-\lambda)^3}{9}\right)}{\Gamma\left(\frac{2}{3}\right)}- \frac{3^{\frac{1}{3}}(\xi-\lambda)}{\Gamma\left(\frac{1}{3}\right)}\,_{0}F_{1}\left(;\frac{4}{3};\frac{(i\, \mbox{Re}\, k) (\xi-\lambda)^3}{9}\right)\right]d\xi.
\label{eq3.32}
\end{multline}
 Hence, making use of Propositions \ref{prp1}, \ref{prp2} and \ref{prp3} and rearranging terms give
\begin{multline}
\phi(y)\sim\frac{e^{-rk y}}{3^{\frac{2}{3}}\, rk}\Bigl\{\frac{\lambda}{\Gamma\left(\frac{2}{3}\right)}\Bigl[\Bigl(\cosh^2(rk\lambda)+\sinh^2(rk\lambda)\Bigr) \sum\limits_{j=0}^{\infty}\frac{(r^2k^2\lambda^2)^j}{\Gamma\left(2j+2\right)}
\\-2rk\lambda \cosh(rk\lambda)\sinh(rk\lambda) \sum\limits_{j=0}^{\infty}\frac{(r^2k^2\lambda^2)^j}{\Gamma\left(2j+3\right)}\Bigr]\\ 
\times \, _{2}F_{3} \Bigl(\frac{1}{3},1;\frac{j+1}{3},\frac{j+2}{3}, \frac{j+3}{3};-\frac{(i\, \mbox{Re}\, k) \lambda^3}{9}\Bigr)\\
  \Bigl[ \frac{\cosh(rk\lambda)}{\Gamma\left(\frac{2}{3}\right)}\Bigl[(y-\lambda)\cosh(rk(y-\lambda)) \sum\limits_{j=0}^{\infty}\frac{(r^2k^2(y-\lambda)^2)^j}{\Gamma\left(2j+2\right)}
\\-rk(y-\lambda)^2\sinh(rk(y-\lambda)) \sum\limits_{j=0}^{\infty}\frac{(r^2k^2(y-\lambda)^2)^j}{\Gamma\left(2j+3\right)}\Bigr]\\ -\frac{\sinh(rk\lambda)}{\Gamma\left(\frac{2}{3}\right)}\Bigl[(y-\lambda)\sinh(rk(y-\lambda)) \sum\limits_{j=0}^{\infty}\frac{(r^2k^2(y-\lambda)^2)^j}{\Gamma\left(2j+2\right)}
\\-rk(y-\lambda)^2\cosh(rk(y-\lambda)) \sum\limits_{j=0}^{\infty}\frac{(r^2k^2(y-\lambda)^2)^j}{\Gamma\left(2j+3\right)}\Bigr]\Bigr]
\\ \times \, _{2}F_{3} \Bigl(\frac{1}{3},1;\frac{j+1}{3},\frac{j+2}{3}, \frac{j+3}{3};\frac{(i\, \mbox{Re}\, k) (y-\lambda)^3}{9}\Bigr)\\
-\frac{\lambda^2}{\Gamma\left(\frac{2}{3}\right)}\Bigl[\Bigl(\cosh^2(rk\lambda)+\sinh^2(rk\lambda)\Bigr) \sum\limits_{j=0}^{\infty}\frac{(r^2k^2\lambda^2)^j}{\Gamma\left(2j+3\right)}
\\-2rk\lambda \cosh(rk\lambda)\sinh(rk\lambda) \sum\limits_{j=0}^{\infty}\frac{(r^2k^2\lambda^2)^j}{\Gamma\left(2j+4\right)}\Bigr]\\ 
\times \, _{2}F_{3} \Bigl(\frac{1}{3},1;\frac{j+2}{3},\frac{j+3}{3}, \frac{j+4}{3};-\frac{(i\, \mbox{Re}\, k) \lambda^3}{9}\Bigr)
\\
\Bigl[\frac{3^{\frac{1}{3}}\cosh(rk\lambda)}{\Gamma\left(\frac{1}{3}\right)}\Bigl[(y-\lambda)^2\cosh(rk(y-\lambda)) \sum\limits_{j=0}^{\infty}\frac{(r^2k^2(y-\lambda)^2)^j}{\Gamma\left(2j+3\right)}
\\-rk(y-\lambda)^3\sinh(rk(y-\lambda)) \sum\limits_{j=0}^{\infty}\frac{(r^2k^2(y-\lambda)^2)^j}{\Gamma\left(2j+4\right)}\Bigr]\\ -\frac{3^{\frac{1}{3}}\sinh(rk\lambda)}{\Gamma\left(\frac{1}{3}\right)}\Bigl[(y-\lambda)^2\sinh(rk(y-\lambda)) \sum\limits_{j=0}^{\infty}\frac{(r^2k^2(y-\lambda)^2)^j}{\Gamma\left(2j+3\right)}
\\-rk(y-\lambda)^3\cosh(rk(y-\lambda)) \sum\limits_{j=0}^{\infty}\frac{(r^2k^2(y-\lambda)^2)^j}{\Gamma\left(2j+4\right)}\Bigr]\Bigr]
\\ \times \, _{2}F_{3} \Bigl(\frac{2}{3},1;\frac{j+2}{3},\frac{j+3}{3}, \frac{j+4}{3};\frac{(i\, \mbox{Re}\, k) (y-\lambda)^3}{9}\Bigr)\Bigr\}
\\+\frac{\cosh(rky)e^{-rk y}}{3^{\frac{2}{3}}\, rk}\Bigl\{
\\ \frac{(y-\lambda)}{\Gamma\left(\frac{2}{3}\right)} \sum\limits_{j=0}^{\infty}\frac{(rk(y-\lambda))^j}{\Gamma\left(j+2\right)} \, _{2}F_{3} \Bigl(\frac{1}{3},1;\frac{j+1}{3},\frac{j+2}{3}, \frac{j+3}{3};\frac{(i\, \mbox{Re}\, k) (y-\lambda)^3}{9}\Bigr)
\\+ \frac{3^{\frac{1}{3}}(y-\lambda)^2}{\Gamma\left(\frac{1}{3}\right)} \sum\limits_{j=0}^{\infty}\frac{(rk(y-\lambda))^j}{\Gamma\left(j+3\right)} \, _{2}F_{3} \Bigl(\frac{2}{3},1;\frac{j+2}{3},\frac{j+3}{3}, \frac{j+4}{3};\frac{(i\, \mbox{Re}\, k) (y-\lambda)^3}{9}\Bigr)\Bigr\}.
\label{eq3.32}
\end{multline}

\section{Concluding remarks and discussion}\label{sec:4}
Some integrals involving the generalized hypergeometric functions $_pF_q$ were evaluated in terns of infinite series involving $_pF_q$  in section \ref{sec:2} (Propositions \ref{prp1}-\ref{prp5}). If $_pF_q$  is entire on $\mathbb{C}$, then the behaviors of  $_pF_q$  for large argument, e.g. $|x|\gg1$, can be assessed, see  \cite{NI} section 16.10. In that case, the integrals evaluated in this study may find applications. Some applications in applied analysis were given in section \ref{sec:3}. For example, the Fourier integral $\int_{-\infty}^{+\infty}x^\alpha e^{-\theta^2 x^2} e^{ikx},\,\alpha>-2$,  and the Laplace integral  $\int_{0}^{+\infty}x^\alpha e^{-\theta^2 x^2} e^{-ux},\,\alpha>-1$, were evaluated by first writing them  in terms of infinite series involving the generalized hypergeometric function  $_pF_q$ (Theorems \ref{th7} and \ref{th8}). Although complicated, the method developed in this work can be used to evaluate other useful integrals which have not been evaluated in the past. In addition, using hyperbolic and Euler identities, some identities involving infinite series of the generalized hypergeometric function $_pF_q$ were obtained  in section \ref{sec:2} ( Theorems \ref{th1}-\ref{th6}).

\end{document}